\documentclass[a4paper]{article}

\usepackage[top=2cm, bottom=2cm, left=1.8cm, right=1.8cm]{geometry}
\usepackage{amsfonts}
\usepackage{latexsym}
\usepackage{amsmath}
\usepackage{setspace}
\usepackage{comment}
\usepackage{bm}
\usepackage{graphics}
\usepackage{graphicx}
\usepackage{epstopdf}
\usepackage[caption=false]{subfig}
\usepackage{color}
\usepackage{algorithmicx}
\usepackage{algorithm}
\usepackage{algpseudocode}
\usepackage{amsopn}
\usepackage{amssymb}
\usepackage{mathtools}
\usepackage{amsthm}
\usepackage{booktabs}

\newcommand{\mini}{\mathop{\rm{minimize}}}
\newcommand{\subj}{{\mbox{subject to}}}

\newcommand{\A}{{\cal A}}
\newcommand{\B}{{\cal B}}
\newcommand{\C}{{\cal C}}
\newcommand{\D}{{\rm D}}

\newcommand{\N}{{\cal N}}
\newcommand{\I}{{\cal I}}
\newcommand{\J}{{\cal J}}
\newcommand{\U}{{\cal U}}

\newcommand{\F}{{\cal F}}
\newcommand{\R}{\mathbb{R}}

\renewcommand{\S}{\mathbb{S}}
\renewcommand{\P}{{\cal P}}

\theoremstyle{definition}
\newtheorem{definition}{Definition}
\newtheorem{theorem}{Theorem}

\newtheorem{lemma}{Lemma}
\newtheorem{remark}{Remark}

\begin{document}

\title{
A twice continuously differentiable penalty function for nonlinear semidefinite programming problems and its application
}

\author{Yuya Yamakawa\thanks{Graduate School of Management, Tokyo Metropolitan University, 1-1 Minami-Osawa, Hachioji-shi, Tokyo 192-0397, Japan, E-mail: yuya@tmu.ac.jp}}

\date{}

\maketitle

\begin{abstract}
This paper presents a twice continuously differentiable penalty function for nonlinear semidefinite programming problems. In some optimization methods, such as penalty methods and augmented Lagrangian methods, their convergence property can be ensured by incorporating a penalty function into them, and hence several types of penalty functions have been proposed. In particular, these functions are designed to apply optimization methods to find first-order stationary points. Meanwhile, in recent years, second-order sequential optimality, such as Approximate Karush-Kuhn-Tucker2 (AKKT2) and Complementarity AKKT2 (CAKKT2) conditions, has been introduced, and the development of methods for such second-order stationary points would be required in future research. However, existing well-known penalty functions have low compatibility with such methods because they are not twice continuously differentiable. In contrast, the proposed function is expected to have a high affinity for methods to find second-order stationary points. To verify the high affinity, we also present a practical penalty method to find points that satisfy the AKKT and CAKKT conditions by exploiting the proposed function and show their convergence properties.
\end{abstract}
{\small
{\bf Keywords.}
Twice continuously differentiable penalty function, Nonlinear semidefinite programming problems, second-order necessary optimality conditions, second-order sequential optimality, penalty method 
}

\section{Introduction} \label{sec:intro}
This study proposes a twice continuously differentiable penalty function related to the following nonlinear semidefinite programming problem (NSDP):
\begin{align}
\begin{aligned} \label{NSDP}
& \mini_{x \in \R^{n}} & & f(x)
\\ 
& \subj & & g(x) = 0, ~ G(x) \succeq O,
\end{aligned}
\end{align}
where the functions $f \colon \R^{n} \rightarrow \R$, $g \colon \R^{n} \rightarrow \R^{m}$, and $G \colon \R^{n} \rightarrow \S^{d}$ are twice continuously differentiable, and $\R^{n}$ denotes the $n$-dimensional Euclidean space, and $\S^{d}$ represents the set of $d \times d$ real symmetric matrices, and $\S_{++}^{d} \, (\S_{+}^{d})$ denotes the set of symmetric positive (semi)definite matrices in $\S^{d}$, and we often denote $M \in \S_{++}^{d}$ and $M \in \S_{+}^{d}$ by $M \succ O$ and $M \succeq O$, respectively, for simplicity. 
\par
NSDPs have been actively studied since the 2000s, and many optimization methods have been proposed for solving them~\cite{CoRa04,FaNoAp02,LeMo02,OkYaFu23,SuSuZh08,WuLuDiCh13,YaOk22,YaYa14,YaYa15IPmethod,YaYa12local,YaYaHa12global,YaYaHa21TRmethod}. Moreover, since they have a higher ability to formulate real problems as mathematical models than nonlinear programming problems (NLPs), the range of their applications is expanding as studies on NSDPs develop. In recent years, sequential optimality has been proposed as genuine necessary optimality for degenerate NSDPs, which satisfy no constraint qualification (CQ), and studies on optimization methods to find points that satisfy those optimality conditions have also been conducted actively.
\par
Sequential optimality serves as necessary optimality for degenerate NSDPs and has several varieties, such as Approximate Karush-Kuhn-Tucker (AKKT), Complementarity AKKT (CAKKT), AKKT2, and CAKKT2 conditions~\cite{AnFuHaSaSe21,AnHaVi20,FuHaMi23,LiYaFu25}. The AKKT and CAKKT conditions are the first-order necessary optimality, and some related optimization methods to find points that satisfy them (i.e., first-order stationary points) have been proposed so far, such as augmented Lagrangian methods~\cite{AnFuHaSaSe21,AnHaVi20} and stabilized sequential quadratic semidefinite programming (SQSDP) methods~\cite{OkYaFu23,YaOk22}. On the other hand, the AKKT2 and CAKKT2 conditions are the second-order necessary optimality, but little research has been done on optimization methods to find points that satisfy them (i.e., second-order stationary points). To the best of the author's knowledge, although the only method regarding AKKT2 and CAKKT2 is the penalty method presented in~\cite{LiYaFu25}, it requires solving a subproblem, which minimizes a penalty function belonging to the H\"{o}lder space $C^{1,1}$, at each iteration and finding its stationary point satisfying some additional conditions similar to its second-order optimality. However, Li et al.~\cite{LiYaFu25} mentioned that the solvability of the subproblems (i.e., a way of finding such a stationary point) is an open question, and hence, the penalty method has a practical issue in the sense that its implementability is not guaranteed.
\par
In the field of nonlinear optimization, augmented Lagrangian methods and sequential quadratic programming (SQP)-type methods are known to be very useful and are implemented in many solvers. Indeed, some variants based on them have been proposed to find points that satisfy the AKKT and CAKKT conditions as stated above. These methods often utilize penalty functions to measure the distance between the optimal solution and an iteration point. For example, Correa and Ram\'{i}rez~\cite{CoRa04} proposed an SQP-type method, employing a penalty function belonging to the class $C$. Moreover, Andreani, Haeser, and Viana~\cite{AnHaVi20} and Yamakawa and Okuno~\cite{YaOk22} proposed an AL method and a stabilized SQSDP method, respectively, and they used essentially the same functions included in $C^{1,1}$. These functions play a crucial role in the convergence analysis of these methods for first-order stationary points. However, they are not twice continuously differentiable, and hence this fact brings practical challenges when considering the development of optimization methods for second-order stationary points. Indeed, as aforementioned above, Li, Yamakawa, and Fukuda~\cite{LiYaFu25} proposed the penalty method, equipped with the penalty function belonging to $C^{1,1}$, regarding the AKKT2 and the CAKKT2 conditions, but the method has an issue regarding the solvability of the subproblems. The issue arises from the fact that the penalty function is not twice continuously differentiable.
\par
In this study, we propose a twice continuously differentiable penalty function for problem~\eqref{NSDP}. Since the proposed penalty function has a higher-order differentiability compared to the existing penalty functions, it is expected to have a high affinity for optimization methods to find second-order stationary points and to bring better properties than the existing penalty functions in practical aspects. Concerning these facts, we provide some properties associated with the proposed penalty function. Moreover, to show that the proposed function has practical applications in the development of optimization methods for second-order stationary points, we present a penalty method equipped with the proposed one and prove that it globally converges to points that satisfy the AKKT2 and the CAKKT2 conditions. A remarkable point of this study is that by designing the penalty method using the proposed function, we can overcome the issue regarding the solvability of the subproblems as seen in~\cite{LiYaFu25} because the subproblems can be solved by the existing methods for unconstrained optimization of twice continuously differentiable functions. Moreover, the proposed method has novel points in the sense that it is implementable and can find second-order stationary points.
\par
The paper is organized as follows. Section~\ref{sec:optimality} introduces mathematical symbols and important concepts associated with~\eqref{NSDP}. Section~\ref{sec:penalty} proposes a twice continuously differentiable penalty function for~\eqref{NSDP} and provides its properties. In Section~\ref{sec:methods}, we present a penalty method and show that it converges to points satisfying second-order optimality conditions. Finally, conclusions are presented in Section~\ref{sec:conclusion}.

\section{Preliminaries} \label{sec:optimality}
This section is divided into two parts. The former part introduces mathematical notation, and the latter part provides several types of optimality conditions for problem~\eqref{NSDP}.

\subsection{Mathematical notation}
Let $\mathbb{N}$ be the set of natural numbers (positive integers). Let $p \in \mathbb{N}$ and $q \in \mathbb{N}$. We denote by $\R^{p \times q}$ the set of matrices with $p$-rows and $q$-columns. The symbol $\S_{--}^{p} \, (\S_{-}^{p})$ represents the set of symmetric negative (semi)definite matrices. Let $w \in \R^{p}$ and $W \in \R^{p \times q}$. The $i$-th element of $w$ is represented by $w_{i}$ or $[w]_{i}$, and the $(i,j)$-th entry of $W$ is written as $W_{ij}$ or $[W]_{ij}$. The transposition of $W$ is represented as $W^{\top} \in \R^{q \times p}$. We denote by $W^{\dagger} \in \R^{q \times p}$ the Moore-Penrose generalized inverse of $W$. The trace of a square matrix $W$ is denoted by ${\rm tr}(M)$. For any $V \in \R^{p \times q}$ and $W \in \R^{p \times q}$, the inner product of $V$ and $W$ is defined as $\left\langle V, W \right\rangle \coloneqq {\rm tr}(V^{\top}W)$. If $q = 1$ holds, we denote by $\left\langle v, w \right\rangle$ the inner product of $v \in \R^{p}$ and $w \in \R^{p}$. The Hadamard product of $V$ and $W$ is denoted by 
\begin{align*}
V \circ W \coloneqq
\begin{bmatrix}
V_{11} W_{11} & \cdots & V_{1q} W_{1q}
\\
\vdots & \ddots & \vdots
\\
V_{p1} W_{p1} & \cdots & V_{pq} W_{pq}
\end{bmatrix}
.
\end{align*}
If $V$ and $W$ are symmetric matrices, we define 
\begin{align*}
V \circledcirc W \coloneqq \frac{1}{2}(VW + WV).
\end{align*}
We denote by $I$ the identity matrix, where its dimension is defined by the context. For any $w \in \R^{p}$, we denote by $\Vert w \Vert$ the Euclidean norm of $w$, that is, $\Vert w \Vert \coloneqq \sqrt{\left\langle w, w \right\rangle}$. For any $z \in \R^{p}$ and $r > 0$, we define 
\begin{align*}
B(z, r) \coloneqq \{ \zeta \in \R^{p}; \Vert \zeta - z \Vert \leq r \}.
\end{align*}
For any $W \in \R^{p \times q}$, we denote by $\Vert W \Vert_{{\rm F}}$ the Frobenius norm of $W$, namely, $\Vert W \Vert_{{\rm F}} \coloneqq \sqrt{\left\langle W, W \right\rangle}$. 
For any $r_{1}, \ldots, r_{\ell} \in \R$, the diagonal matrix whose $(i, i)$-th entry is equal to $r_{i}$ is written by ${\rm diag} ( r_{1}, \ldots, r_{\ell} )$. Let $X \in \S^{\ell}$ be a matrix with the following orthogonal diagonalization:
\begin{align*}
X = P 
\begin{bmatrix}
\lambda_{1}^{P}(X) & & O
\\
& \ddots &
\\
O & & \lambda_{\ell}^{P}(X)
\end{bmatrix}
P^{\top}, \quad P P^{\top} = P^{\top} P = I,
\end{align*}
where $\lambda_{1}^{P}(X), \ldots, \lambda_{\ell}^{P}(X)$ stand for the eigenvalues of $X$. In particular, we omit the superscript $P$ when considering a diagonalization that places the eigenvalues of $X$ in descending order $\lambda_{1}(X) \geq \cdots \geq \lambda_{\ell}(X)$. Moreover, we use the following notation:
\begin{align*}
\lambda^{P}(X) \coloneqq
\begin{bmatrix}
\lambda_{1}^{P}(X)
\\
\vdots
\\
\lambda_{\ell}^{P}(X)
\end{bmatrix}
, \quad \lambda(X) \coloneqq
\begin{bmatrix}
\lambda_{1}(X)
\\
\vdots
\\
\lambda_{\ell}(X)
\end{bmatrix}
.
\end{align*}
The matrices $[X]_{+}$ and $|X|$ are defined by
\begin{align*}
[ X ]_{+} \coloneqq P
\begin{bmatrix}
[\lambda_{1}^{P}(X)]_{+} & & O
\\
& \ddots &
\\
O & & [\lambda_{\ell}^{P}(X)]_{+}
\end{bmatrix}
P^{\top}, \quad | X | \coloneqq P
\begin{bmatrix}
|\lambda_{1}^{P}(X)| & & O
\\
& \ddots &
\\
O & & |\lambda_{\ell}^{P}(X)|
\end{bmatrix}
P^{\top},
\end{align*}
respectively, where $[r]_{+} \coloneqq \max \{ r, 0 \}$ for any $r \in \R$. Let $\varphi$ be a function from $\R^{p}$ to $\R$. We denote by $\nabla \varphi(w)$ (or $\nabla_{w} \varphi(w)$) the gradient of $\varphi$ at $w \in \R^{p}$. We write $\nabla^{2} \varphi(w)$ (or $\nabla_{ww}^{2} \varphi(w)$) for the Hessian of $\varphi$ at $w \in \R^{p}$. Let $\psi_{1}, \ldots, \psi_{q}$ be functions from $\R^{p}$ to $\R$, and let $\psi(w) \coloneqq [\psi_{1}(w) \, \cdots \, \psi_{q}(w)]^{\top} \in \R^{q}$ for any $w \in \R^{p}$. We define $\nabla \psi(w) \coloneqq [\nabla \psi_{1}(w) \, \cdots \, \nabla \psi_{q}(w)] \in \R^{p \times q}$ for any $w \in \R^{p}$. Let $\Phi$ be a function from ${\cal X}$ to ${\cal Y}$, where ${\cal X}$ and ${\cal Y}$ are inner product spaces.
The derivative of $\Phi$ at $\xi \in {\cal X}$ is denoted by $\D \Phi(\xi)$. We denote by $\D \Phi(\xi)^{\ast}$ the adjoint operator of $\D \Phi(\xi)$. If $\Phi$ is a linear operator, its norm is defined by 
\begin{align*}
\Vert \Phi \Vert \coloneqq \sup \{ \Vert \Phi(\xi) \Vert; \Vert \xi \Vert \leq 1 \}.
\end{align*}
For a set $S \subset \mathbb{N}$, the cardinality of $S$ is represented by $|S|$.

\subsection{Optimality conditions}
Let $g_{1}, \ldots, g_{m}$ be functions from $\R^{n}$ to $\R$ satisfying
\begin{align*}
g(x) = [g_{1}(x) ~ \cdots ~ g_{m}(x)]^{\top} \quad \forall x \in \R^{n}. 
\end{align*} 
We denote by $\mathcal{S}$ the set of feasible points of problem~\eqref{NSDP}, namely,
\begin{align*}
\mathcal{S} \coloneqq \{ x \in \R^{n}; \, g(x) = 0, \, G(x) \succeq O \}. 
\end{align*}
Let us define the Lagrange function $L \colon \R^{n} \times \R^{m} \times \S^{d} \to \R$ as
\begin{eqnarray*}
L(x, y, Z) \coloneqq f(x) - \langle g(x), y \rangle - \langle G(x), Z \rangle,
\end{eqnarray*}
where $y$ and $Z$ are Lagrange multipliers for $g(x) = 0$ and $G(x) \succeq 0$, respectively. Then, the gradient of $L$ at $(x, y, Z) \in \R^{n} \times \R^{m} \times \S^{d}$ with respect to $x$ is given by 
\begin{eqnarray*}
\nabla _{x} L(x, y, Z) = \nabla f(x) - \nabla g(x) y - \D G(x)^{\ast} Z.
\end{eqnarray*}
We introduce the Karush-Kuhn-Tucker (KKT) conditions for~\eqref{NSDP}. 

\begin{definition}
We say that the KKT conditions hold at a feasible point $\bar{x} \in \R^{n}$ if there exist $\bar{y} \in \R^{m}$ and $\bar{Z} \in \S_{+}^{d}$ such that
\begin{eqnarray*} 
\displaystyle \nabla_{x} L(\bar{x}, \bar{y}, \bar{Z}) = 0, \quad \langle G(\bar{x}), \bar{Z} \rangle = 0.
\end{eqnarray*}
Moreover, we call a point $\bar{x}$ satisfying the KKT conditions a KKT point and call $(\bar{x}, \bar{y}, \bar{Z})$ a KKT triplet.
\end{definition}
\noindent
The KKT conditions are the first-order optimality for problem~\eqref{NSDP} under some CQ. We provide the Robinson CQ (RCQ) for problem~\eqref{NSDP}.
\begin{definition}
We say that RCQ holds at a feasible point $\bar{x} \in \mathcal{S}$ if
\begin{align*}
0 \in {\rm int} \left(
\begin{bmatrix}
g(\bar{x})
\\
G(\bar{x})
\end{bmatrix}
+
\begin{bmatrix}
\nabla g(\bar{x})^{\top}
\\
\D G(\bar{x})
\end{bmatrix}
\R^{n}
-
\begin{bmatrix}
\{ 0 \}
\\
\S_{+}^{d}
\end{bmatrix}
\right),
\end{align*}
where ${\rm int}(U)$ denotes the topologicall interior of a set $U \in \R^{m} \times \S^{d}$.
\end{definition}
\par
Next, we provide the definitions of the AKKT and CAKKT conditions for NSDP \eqref{NSDP}. These conditions have been introduced by~\cite{AnFuHaSaSe24,AnHaVi20}. 
\begin{definition} \label{AKKT_def}
We say that the AKKT conditions hold at a feasible point $\bar{x} \in \mathcal{S}$ if there exist sequences $\{ x_{k} \} \subset \R^{n}, ~ \{ y_{k} \} \subset \R^{m}$, and $\{ Z_{k} \} \subset \S^{d}_{+}$ such that 
\begin{gather*}
\displaystyle \lim_{j \to \infty} x_{k} = \bar{x}, \quad \displaystyle \lim_{k \to \infty} \nabla_{x} L(x_{k}, y_{k}, Z_{k}) = 0,
\\
\lambda_{j}^{P}(G(\bar{x})) > 0 \quad \Longrightarrow \quad \exists k_{j} \in \mathbb{N}, ~~ \forall k \geq k_{j}, ~~ \lambda_{j}^{P_{k}}(Z_{k}) = 0, 
\end{gather*}
where $j \in \{1, \ldots, d\}$ is arbitrary, and $P$ and $P_{k} ~ (k \in \mathbb{N})$ are orthogonal matrices such that 
\begin{align*}
\lim_{k \to \infty} P_{k} = P, \quad G(\bar{x}) = P {\rm diag} ( \lambda_{1}^{P}(G(\bar{x})), \ldots, \lambda_{d}^{P}(G(\bar{x})) ) P^{\top}, \quad Z_{k} = P_{k} {\rm diag} ( \lambda_{1}^{P_{k}}(Z_{k}), \ldots, \lambda_{d}^{P_{k}}(Z_{k}) ) P_{k}^{\top}. 
\end{align*}
Moreover, we call a point $\bar{x}$ satisfying the AKKT conditions an AKKT point and call a sequence $\{ (x_{k}, y_{k}, Z_{k}) \}$ an AKKT sequence corresponding to $\bar{x}$.
\end{definition}
\begin{definition} \label{CAKKT_def}
We say that the CAKKT conditions hold at a feasible point $\bar{x} \in \mathcal{S}$ if there exist sequences $\{ x_{k} \} \subset \R^{n}$, $\{ y_{k} \} \subset \R^{m}$, and $\{ Z_{k} \} \subset \S^{d}_{+}$ such that 
\begin{align*}
\displaystyle \lim_{k \to \infty} x_{k} = \bar{x}, \quad \displaystyle \lim_{k \to \infty} \nabla_{x} L(x_{k}, y_{k}, Z_{k}) = 0, \quad \lim_{k \to \infty} G(x_{k}) \circledcirc Z_{k} = O.
\end{align*}
Moreover, we call a point $\bar{x}$ satisfying the CAKKT conditions a CAKKT point and call a sequence $\{ (x_{k}, y_{k}, Z_{k}) \}$ a CAKKT sequence corresponding to $\bar{x}$.
\end{definition}
\noindent
The KKT, the AKKT, and the CAKKT conditions are known as first-order optimality for problem~\eqref{NSDP}. In particular, the AKKT and CAKKT conditions are referred to as sequential optimality and are significantly different from the KKT conditions in the sense that they serve as optimality conditions regardless of whether CQs hold or not. (Conversely, the KKT conditions are meaningless as necessary optimality conditions unless some CQ, such as RCQ, exists.) Moreover, it is generally known that CAKKT implies AKKT. For details, see~\cite{AnFuHaSaSe24,AnHaVi20}.
\par
In the remainder of this section, we introduce second-order necessary conditions for problem~\eqref{NSDP}. To this end, we prepare some terminology. For any $x \in \R^{n}$, let us define
\begin{align*}
G_{i}(x) \coloneqq \frac{\partial}{\partial x_{i}} G(x) \quad ( \, i = 1, \ldots, n \, ).
\end{align*}
For all $(x,Z) \in \R^{n} \times \S^{d}$, the sigma-term is defined as
\begin{align*}
\sigma(x, Z) \coloneqq \left[ 2 \langle Z, G_{i}(x) G(x)^{\dagger} G_{j}(x) \rangle \right]_{ij} \in \S^{d}.
\end{align*} 
The critical cone at a point $x$ is defined by
\begin{align*}
C(x) \coloneqq \left\{ h \in \R^{n}; \, \langle \nabla f(x), h \rangle = 0, \, \nabla g(x)^{\top} h = 0, \, \D G(x) h \in T_{\S_{+}^{d}}(G(x)) \right\},
\end{align*}
where
\begin{align*}
T_{\S_{+}^{d}}(G(x)) \coloneqq \left\{ D \in \S^{d}; 
\begin{array}{c}
\displaystyle \exists \{ D_{k} \} \subset \S^{d}, \, \exists \{ \alpha_{k} \} \subset \R, \, \lim_{k \to \infty} D_{k} = D, \, \lim_{k \to \infty} \alpha_{k} = 0, 
\\
G(x) + \alpha_{k} D_{k} \in \S_{+}^{d}, ~~ \alpha_{k} > 0 \quad \forall k \in \mathbb{N}
\end{array}
\right\}.
\end{align*}
Now, we define a basic second-order necessary condition (BSONC) for problem~\eqref{NSDP}.
\begin{definition} \label{def:BSONC}
We say that BSONC holds at a feasible point $\bar{x} \in \mathcal{S}$ if for any $h \in C(\bar{x})$, there exist $\bar{y}_{h} \in \R^{m}$ and $\bar{Z}_{h} \in \S_{+}^{d}$ such that $(\bar{x}, \bar{y}_{h}, \bar{Z}_{h})$ is a KKT triplet, and
\begin{align*}
h^{\top} \left( \nabla_{xx}^{2} L(\bar{x}, \bar{y}_{h}, \bar{Z}_{h}) + \sigma(\bar{x}, \bar{Z}_{h}) \right) h \geq 0.
\end{align*}
\end{definition} 
\noindent
Note that the Lagrange multiplier pair $(\bar{y}_{h}, \bar{Z}_{h})$ described in Definition~\ref{def:BSONC} depends on $h \in C(\bar{x})$. As mentioned in~\cite{FuHaMi23,LiYaFu25}, this fact makes it difficult to verify whether BSONC holds or not. To overcome this drawback, the weak second-order necessary condition (WSONC) was introduced in~\cite{FuHaMi23}. Although WSONC overcomes the drawback of BSONC, it requires several additional conditions, such as RCQ and the weak constant rank property, to be a necessary condition. For this reason, the AKKT2 and the CAKKT2 conditions were proposed in~\cite{LiYaFu25}. Of course, they are second-order necessary optimality conditions that do not require any additional conditions, unlike WSONC. To introduce these two conditions, we prepare the following lemma.
\begin{lemma} \cite[Example~3.140]{BoSh00} \label{lem:BoSh00}
Suppose that $G(\bar{x}) \succeq O$ holds. Let $\B$ be a set defined by $\B \coloneqq \{ j \in \mathbb{N}; \lambda_{j}(G(\bar{x})) = 0 \}$, and let $U_{\B}$ be a matrix with orthonormal columns that span ${\rm Ker}(G(\bar{x}))$. Then, there exist a neighborhood $\N$ of $G(\bar{x})$ and an analytic matrix-valued function $\U_{\B} \colon \N \to \R^{d \times |\B|}$ such that $\U_{\B}(G(\bar{x})) = U_{\B}$ and for any $x \in \R^{n}$ satisfying $G(x) \in \N$, the columns of $\U_{\B}(G(x))$ form an orthonormal basis of a space spanned by the eigenvectors associated with the $|\B|$ smallest eigenvalues of $G(x)$.
\end{lemma}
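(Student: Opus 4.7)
The plan is to appeal to analytic perturbation theory: I would construct $\U_{\B}$ by extracting the orthogonal projector onto the invariant subspace of the $|\B|$ smallest eigenvalues via a contour integral, then producing an orthonormal basis through an analytic polar-type step. This reproduces the argument behind Example~3.140 of~\cite{BoSh00} in the present symmetric setting. The first ingredient is spectral isolation: since $G(\bar{x}) \succeq O$ and $\B$ collects its zero eigenvalues, I pick $\delta > 0$ strictly smaller than the smallest positive eigenvalue of $G(\bar{x})$. Continuity of the eigenvalues in the matrix entries then provides a neighborhood $\N$ of $G(\bar{x})$ in $\S^{d}$ such that, for every $X \in \N$, exactly $|\B|$ eigenvalues of $X$ lie strictly inside the circle $\Gamma \coloneqq \{\zeta \in \mathbb{C}: |\zeta| = \delta/2\}$ and the remaining $d - |\B|$ lie strictly outside it.

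Next, I define the spectral projector
\[
\Pi(X) \coloneqq \frac{1}{2\pi i}\oint_{\Gamma} (\zeta I - X)^{-1}\,d\zeta,
\]
which, for each $X \in \N$, is the orthogonal projector onto the invariant subspace of $X$ associated with its $|\B|$ smallest eigenvalues. Since $\Gamma$ stays at positive distance from $\mathrm{spec}(X)$ throughout $\N$, the integrand depends analytically on $X$, so $\Pi \colon \N \to \S^{d}$ is analytic and satisfies $\Pi(G(\bar{x})) = U_{\B}U_{\B}^{\top}$. I then orthonormalize analytically: set $M(X) \coloneqq \Pi(X) U_{\B}$, so that $M(G(\bar{x})) = U_{\B}$ and, after shrinking $\N$ if necessary, $M(X)^{\top}M(X) \in \S_{++}^{|\B|}$ for every $X \in \N$. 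Define
\[
\U_{\B}(X) \coloneqq M(X)\bigl(M(X)^{\top}M(X)\bigr)^{-1/2}.
\]
The inverse square root of a positive definite matrix is an analytic function of its argument (via the contour functional calculus applied to the principal branch of $s \mapsto s^{-1/2}$ on a loop enclosing the spectrum), so $\U_{\B}$ is analytic on $\N$. One then checks $\U_{\B}(G(\bar{x})) = U_{\B}$ and $\U_{\B}(X)^{\top}\U_{\B}(X) = I$ directly, while the columns of $\U_{\B}(X)$ lie in $\mathrm{range}(\Pi(X))$ and are $|\B|$ in number, hence they form an orthonormal basis of the target eigenspace.

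The main obstacle is maintaining analyticity throughout: individual eigenvectors of $X$ generally fail to depend analytically on $X$ near eigenvalue coalescence, so a direct diagonalization argument is not available. The contour-integral projector bypasses this by treating the $|\B|$ smallest eigenvalues as a single analytic cluster, and the symmetric polar factor $M(M^{\top}M)^{-1/2}$ orthonormalizes without singling out any individual eigendirection. The remaining technical checks, namely the uniform spectral gap needed to form $\Gamma$ and the analyticity of the matrix-valued inverse square root on $\S_{++}^{|\B|}$, are classical and short, so I would only sketch them.
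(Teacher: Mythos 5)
The paper does not actually prove this lemma; it is imported verbatim from \cite[Example~3.140]{BoSh00}, so there is no in-paper argument to compare against. Your proof --- isolating the cluster of the $|\B|$ smallest eigenvalues by a contour $\Gamma$ with a uniform spectral gap, forming the analytic Riesz projector $\Pi(X)$, and orthonormalizing $\Pi(X)U_{\B}$ through the analytic inverse square root of $M(X)^{\top}M(X)$ --- is correct and is essentially the standard perturbation-theoretic argument behind the cited example, so nothing further is needed.
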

\noindent
Moreover, we define the perturbed critical subspace $S(x_{k}, \bar{x}) \subset \R^{n}$. Let $\bar{x} \in \R^{n}$ be a feasible point and let $\{ x_{k} \} \subset \R^{n}$ be a sequence converging to $\bar{x}$. Since $G(\bar{x}) \succeq O$ holds, Lemma~\ref{lem:BoSh00} ensures the existence of a neighborhood $\N$ of $G(\bar{x})$ and a analytic function $\U_{\B} \colon \N \to \R^{d \times |\B|}$ as described in Lemma~\ref{lem:BoSh00}, where $\B$ is the set defined in Lemma~\ref{lem:BoSh00}. From the continuity of $G$, there exists $\bar{n} \in \mathbb{N}$ such that $G(x_{k}) \in \N$ for all $k \geq \bar{n}$. Then, the perturbed critical subspace $S(x_{k}, \bar{x})$ is defined as
\begin{align*}
S(x_{k}, \bar{x}) \coloneqq \left\{ h \in \R^{n}; \, \nabla g(x_{k})^{\top} h = 0, \, \U_{\B}(G(x_{k}))^{\top} (\D G(x_{k})h) \U_{\B}(G(x_{k})) = O \right\} \quad \forall k \geq \bar{n}.
\end{align*}

\noindent
By using the above notation, the AKKT2 and the CAKKT2 conditions are defined as follows:
\begin{definition} \label{def:AKKT2}
We say that the AKKT2 conditions hold at a feasible point $\bar{x} \in \mathcal{S}$ if there exist $\{ (x_{k}, y_{k}, Z_{k}, \varepsilon_{k}) \} \subset \R^{n} \times \R^{m} \times \S_{+}^{d} \times (0, \infty)$ and $\bar{n} \in \mathbb{N}$ such that $x_{k} \to \bar{x}$ and $\varepsilon_{k} \to 0$ as $k \to \infty$, $\{ (x_{k}, y_{k}, Z_{k}) \}$ is an AKKT sequence corresponding to $\bar{x}$, and
\begin{align*}
\forall k \geq \bar{n}, \quad \forall h \in S(x_{k}, \bar{x}), \quad h^{\top} \left( \nabla_{xx}^{2} L(x_{k}, y_{k}, Z_{k}) + \sigma(x_{k}, Z_{k}) \right) h \geq - \varepsilon_{k} \Vert h \Vert^{2}.
\end{align*}
Moreover, we call $\bar{x}$ an AKKT2 point and call $\{ (x_{k}, y_{k}, Z_{k}, \varepsilon_{k}) \}$ an AKKT2 sequence corresponding to $\bar{x}$.
\end{definition}

\begin{definition} \label{def:CAKKT2}
We say that the CAKKT2 conditions hold at a feasible point $\bar{x} \in \mathcal{S}$ if there exist $\{ (x_{k}, y_{k}, Z_{k}, \varepsilon_{k}) \} \subset \R^{n} \times \R^{m} \times \S_{+}^{d} \times (0, \infty)$ and $\bar{n} \in \mathbb{N}$ such that $x_{k} \to \bar{x}$ and $\varepsilon_{k} \to 0$ as $k \to \infty$, $\{ (x_{k}, y_{k}, Z_{k}) \}$ is a CAKKT sequence corresponding to $\bar{x}$, and
\begin{align*}
\forall k \geq \bar{n}, \quad \forall h \in S(x_{k}, \bar{x}), \quad h^{\top} \left( \nabla_{xx}^{2} L(x_{k}, y_{k}, Z_{k}) + \sigma(x_{k}, Z_{k}) \right) h \geq - \varepsilon_{k} \Vert h \Vert^{2}.
\end{align*}
Moreover, we call $\bar{x}$ a CAKKT2 point and call $\{ (x_{k}, y_{k}, Z_{k}, \varepsilon_{k}) \}$ a CAKKT2 sequence corresponding to $\bar{x}$.
\end{definition}

\noindent
As aforementioned, AKKT2 and CAKKT2 are second-order necessary conditions without requiring any assumptions. These facts are ensured by~\cite[Theorems~4.5 and 4.8]{LiYaFu25}. Moreover, they are referred to as second-order sequential optimality.

\section{Twice continuously differentiable penalty function} \label{sec:penalty}
The purpose of this section is to propose a penalty function associated with problem~\eqref{NSDP}. The proposed function is twice continuously differentiable and is expected to play a crucial role in designing optimization methods to find second-order stationary points for~\eqref{NSDP}.
\par
We begin by reviewing the existing penalty functions associated with problem~\eqref{NSDP}. The following penalty function is often used in SQP-type methods, such as~\cite{CoRa04,LiZh19}:
\begin{align*}
\mathcal{P}_{1}(x; \sigma) \coloneqq f(x) + \sigma \Vert g(x) \Vert + \sigma [\lambda_{\max}(-G(x))]_{+},
\end{align*}
where $\sigma > 0$ is a given parameter. Although the above function is not differentiable clearly, it is known that its directional derivative exists if $x$ is feasible. For details, see~\cite[Lemma~6]{CoRa04}.
\par
Next, we introduce the following penalty function:
\begin{align*}
\mathcal{P}_{2}(x; v, M, \tau) \coloneqq f(x) + \frac{\tau}{2} \left\Vert \frac{1}{\tau} v - g(x) \right\Vert^{2} + \frac{\tau}{2} \left\Vert \left[ \frac{1}{\tau} M - G(x) \right]_{+} \right\Vert_{{\rm F}}^{2},
\end{align*}
where $v \in \R^{m}$, $M \in \S^{d}$, and $\tau > 0$. The above function is essentially equal to the augmented Lagrangian, and it is utilized in several existing methods, such as~\cite{AnHaVi20,OkYaFu23,SuSuZh08,YaOk22}. Moreover, this function is continuously differentiable, and its gradient is Lipschitz continuous, and thus it belongs to the H\"{o}lder space $C^{1,1}$.
\par
To measure the distance between $x \in \R^{n}$ and the feasible set of problem~\eqref{NSDP}, the following penalty function is often used:
\begin{align*}
\mathcal{P}_{3}(x) \coloneqq \frac{1}{2} \Vert g(x) \Vert^{2} + \frac{1}{2} \Vert [-G(x)]_{+} \Vert_{{\rm F}}^{2}.
\end{align*}
Note that this function also belongs to the H\"{o}lder space $C^{1,1}$ and can be found in the existing studies regarding augmented Lagrangian methods~\cite{AnFuHaSaSe21,AnHaVi20} and stabilized SQSDP methods~\cite{OkYaFu23,YaOk22}. 
\par
The functions $\mathcal{P}_{1}$, $\mathcal{P}_{2}$, and $\mathcal{P}_{3}$ are utilized in existing optimization methods for finding first-order stationary points and play a crucial role in them. However, these functions are not twice continuously differentiable, and thus they are not suitable for developing optimization methods for second-order stationary points, as mentioned in Section~\ref{sec:intro}. Therefore, this study proposes the following twice continuously differentiable penalty function:
\begin{align*}
F(x; v, M, \rho, \sigma, \tau) \coloneqq \rho f(x) + \frac{\sigma \tau}{2} \left\Vert \frac{1}{\tau} v - g(x) \right\Vert^{2} + \frac{\sigma \tau}{4} {\rm tr} \left( \left[ \frac{1}{\tau} M - G(x) \right]_{+}^{4} \right),
\end{align*}
where $v \in \R^{m}$, $M \in \S^{d}$, $\rho > 0$, $\sigma > 0$, and $\tau > 0$ are parameters.
\par
From now on, we consider the differentiability of the proposed function. Since the first and second terms are clearly continuously differentiable, we show that the third term is continuously differentiable.

\begin{lemma} \label{lem:derivative_thirdterm}
Let $M \in \S^{d}$ and $\tau > 0$ be given, and let $\varphi \colon \R^{n} \to \R$ be defined by $\varphi(x) \coloneqq {\rm tr}( [\frac{1}{\tau}M - G(x)]_{+}^{4} )$ for any $x \in \R^{n}$. Then, the function $\varphi$ is continuously differentiable on $\R^{n}$, and its gradient is given by
\begin{align*}
\nabla \varphi (x) = - 4 \D G(x)^{\ast} \left[ \frac{1}{\tau} M - G(x) \right]_{+}^{3}
\end{align*}
for any $x \in \R^{n}$.
\end{lemma}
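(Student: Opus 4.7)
The plan is to reduce the claim to a smoothness statement about the spectral function $\Psi \colon \S^{d} \to \R$ defined by $\Psi(X) \coloneqq {\rm tr}([X]_{+}^{4})$. Since $\varphi(x) = \Psi\bigl( \frac{1}{\tau} M - G(x) \bigr)$ and the inner map $x \mapsto \frac{1}{\tau} M - G(x)$ is twice continuously differentiable by the hypothesis on $G$, it suffices to show that $\Psi$ is continuously differentiable on $\S^{d}$ with $\nabla \Psi(X) = 4 [X]_{+}^{3}$. Granted this, the chain rule yields $\nabla \varphi(x) = - \D G(x)^{\ast} \nabla \Psi\bigl( \frac{1}{\tau} M - G(x) \bigr) = - 4\, \D G(x)^{\ast} [\frac{1}{\tau} M - G(x)]_{+}^{3}$, which matches the claim.

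To analyse $\Psi$, I would exploit that it is symmetric in the spectrum of its argument: for any orthogonal diagonalization of $X$, one has $\Psi(X) = \sum_{i=1}^{d} h(\lambda_{i}(X))$, where $h \colon \R \to \R$ is the scalar function $h(r) \coloneqq [r]_{+}^{4}$. A direct calculation shows that $h \in C^{1}(\R)$ with $h'(r) = 4 [r]_{+}^{3}$; the only delicate point is $r = 0$, where both one-sided difference quotients of $h$ vanish and agree with $4[0]_{+}^{3} = 0$, while continuity of $h'$ is immediate from the continuity of $r \mapsto [r]_{+}^{3}$. I would then invoke the standard theorem on symmetric spectral functions (in the style of Lewis) to conclude that $\Psi$ inherits $C^{1}$-smoothness from $h$ and that its gradient at $X$ is obtained by replacing each eigenvalue $\lambda_{i}(X)$ in the diagonalization of $X$ with $h'(\lambda_{i}(X))$; the resulting matrix is precisely $4[X]_{+}^{3}$.

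The main obstacle is the non-smoothness of the projection $X \mapsto [X]_{+}$ itself at matrices having $0$ in their spectrum: one cannot simply differentiate $[X]_{+}$ first and then apply a power rule. The quartic exponent absorbs this non-smoothness, because $r \mapsto [r]_{+}^{4}$ already possesses three continuous derivatives on all of $\R$. If one prefers to avoid appealing to the spectral function theorem, the gradient formula can be verified directly: expand $\Psi(X + tV) - \Psi(X)$ to first order in $t$ by combining Weyl's perturbation bound $|\lambda_{i}(X + tV) - \lambda_{i}(X)| = O(t)$ with the first-order Taylor expansion of $h$ at each $\lambda_{i}(X)$, and then observe that the candidate gradient $X \mapsto 4[X]_{+}^{3}$ is continuous in $X$ since the scalar map $r \mapsto 4[r]_{+}^{3}$ is continuous. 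The final step is to push this through the chain rule as described above, which is routine given the $C^{2}$ hypothesis on $G$.
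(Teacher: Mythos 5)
Your proposal is correct and follows essentially the same route as the paper: both reduce the claim to the spectral function $X \mapsto {\rm tr}([X]_{+}^{4})$, verify that the underlying symmetric scalar function $r \mapsto [r]_{+}^{4}$ is $C^{1}$ with derivative $4[r]_{+}^{3}$, invoke Lewis's theorem on differentiability of spectral functions to get the gradient $4[X]_{+}^{3}$, and finish with the chain rule. The paper cites this as \cite[Corollary~3.2]{Le96}, which is precisely the result you describe.
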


\begin{proof}
Let us define $\phi \colon \R^{d} \to \R$ as $\phi(w) \coloneqq [w_{1}]_{+}^{4} + \cdots + [w_{d}]_{+}^{4}$ for any $w \in \R^{d}$. We take $W \in \S^{d}$ arbitrarily. Then, note that
\begin{align*}
\psi(W) \coloneqq \phi(\lambda(W)) = \sum_{j=1}^{d} [\lambda_{j}(W)]_{+}^{4} = {\rm tr} \left( [ W ]_{+}^{4} \right).
\end{align*}
We show that $\psi \colon \S^{d} \to \R$ is continuously differentiable on $\S^{d}$. Recall that $\phi$ is continuously differentiable on $\R^{d}$, and its gradient is given by $\nabla \phi(w) = 4 [[w_{1}]_{+}^{3}, \ldots, [w_{d}]_{+}^{3}]^{\top}$ for each $w \in \R^{d}$. It then follows from~\cite[Corollary~3.2]{Le96} that $\psi$ is continuously differentiable on $\S^{d}$, and its derivative is given by $\D \psi(W) = 4 [W]_{+}^{3}$. By these facts and the chain rule of differentiation, we can easily verify that the assertion is true.
\end{proof}

\par
Next, we show that the proposed function is twice continuously differentiable. To this end, we define the function $\mathcal{Q} \colon \S^{d} \to \S^{d}$ as \begin{align}
\mathcal{Q}(X) \coloneqq [X]_{+}^{3} \label{def:funcQ}
\end{align}
for any $X \in \S^{d}$, and consider the differentiability of $\mathcal{Q}$.

\begin{lemma} \label{lem:derivative_Q}
The function $\mathcal{Q}$ is differentiable on $\S^{d}$, and the derivative of $\mathcal{Q}$ at $X$ is given as follows: if $X$ is a zero matrix, then $\D \mathcal{Q} (X) H = O$ for all $H \in \S^{d}$; if $X$ is a nonzero matrix that has the following decomposition
\begin{align*}
\begin{gathered}
X = 
\begin{bmatrix}
P_{\I} & P_{\J}
\end{bmatrix}
\begin{bmatrix}
\Lambda & O
\\
O & O
\end{bmatrix}
\begin{bmatrix}
P_{\I}^{\top}
\\
P_{\J}^{\top}
\end{bmatrix}
, ~~ P_{\I} \in \R^{d \times \ell}, ~~ P_{\J} \in \R^{d \times (d-\ell)},~~ \Lambda \coloneqq {\rm diag}(\mu_{1}, \ldots, \mu_{\ell}), ~~ \mu_{j} \not = 0 ~~ \forall j \in \{1, \ldots, \ell \},
\end{gathered}
\end{align*}
then,
\begin{align*}
{\rm D} {\cal Q}(X) H = 
\begin{bmatrix}
P_{\I} & P_{\J}
\end{bmatrix}
\begin{bmatrix}
A \circ P_{\I}^{\top} H P_{\I} & B P_{\I}^{\top} H P_{\J}
\\
P_{\J}^{\top} H P_{\I} B & O
\end{bmatrix}
\begin{bmatrix}
P_{\I}^{\top}
\\
P_{\J}^{\top}
\end{bmatrix}
\end{align*}
for all $H \in \S^{d}$, where $A \in \S^{\ell}$ and $B \in \S^{\ell}$ are defined by
\begin{align*}
A \coloneqq \left[ \frac{(|\mu_{i}|^{3} + \mu_{i}^{3} + |\mu_{j}|^{3} + \mu_{j}^{3})(\mu_{i}^{2} + \mu_{i}\mu_{j} + \mu_{j}^{2})}{2(|\mu_{i}|^{3} + |\mu_{j}|^{3})} \right]_{ij}, \quad B \coloneqq \frac{1}{2} ( |\Lambda| + \Lambda ) \Lambda.
\end{align*}
Note that if $X$ is nonsingular, the parts associated with $P_{\J}$ should be ignored.
\end{lemma}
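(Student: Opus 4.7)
The plan is to recognise \(\mathcal{Q}\) as a L\"owner spectral (primary matrix) operator, namely \(\mathcal{Q}(X) = f(X)\) obtained by applying the scalar function \(f \colon \R \to \R\), \(f(t) = [t]_{+}^{3}\), to \(X\) via the spectral calculus. A direct computation gives \(f'(t) = 3[t]_{+}^{2}\) and \(f''(t) = 6[t]_{+}\), so \(f \in C^{2}(\R)\); in particular \(f \in C^{1}(\R)\), which is exactly the hypothesis needed for the Daleckii--Krein differentiation formula for primary matrix functions. I would invoke that formula to conclude that \(\mathcal{Q}\) is Fr\'echet differentiable on \(\S^{d}\): at any \(X = Q\,{\rm diag}(\mu_{1},\ldots,\mu_{d})\,Q^{\top}\) with \(QQ^{\top}=I\), the derivative acts on \(H \in \S^{d}\) by
\[
\D \mathcal{Q}(X) H \;=\; Q\,\bigl(\Phi \circ (Q^{\top} H Q)\bigr)\,Q^{\top},
\]
where \(\Phi\) is the divided-difference matrix with \(\Phi_{ij} = (f(\mu_{i})-f(\mu_{j}))/(\mu_{i}-\mu_{j})\) for \(\mu_{i} \neq \mu_{j}\) and \(\Phi_{ii} = f'(\mu_{i})\).

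The second step is to translate this into the block form stated in the lemma. I would order the eigenvalues so that the nonzero ones (indexed by \(\I\), with orthonormal eigenvectors \(P_{\I}\)) come first and the zero ones (indexed by \(\J\), eigenvectors \(P_{\J}\)) come last, and split the Hadamard product into the four blocks \((\I,\I),(\I,\J),(\J,\I),(\J,\J)\). The \((\J,\J)\) block vanishes because every divided difference there equals \(f'(0)=0\). In the \((\I,\J)\) and \((\J,\I)\) blocks, the divided difference \(f[\mu_{i},0]=[\mu_{i}]_{+}^{3}/\mu_{i}\) depends only on the index \(i \in \I\), so acting entrywise amounts to left (resp.\ right) multiplication by the diagonal matrix \(B = \tfrac{1}{2}(|\Lambda|+\Lambda)\Lambda\); one checks that the \(i\)th diagonal entry of \(B\) is \(\mu_{i}^{2}\) when \(\mu_{i}>0\) and \(0\) when \(\mu_{i}<0\), matching \(f[\mu_{i},0]\) exactly. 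The \((\I,\I)\) block is \(A \circ P_{\I}^{\top} H P_{\I}\) with \(A_{ij} = f[\mu_{i},\mu_{j}]\), so it only remains to verify the algebraic identity
\[
f[\mu_{i},\mu_{j}] \;=\; \frac{(|\mu_{i}|^{3}+\mu_{i}^{3}+|\mu_{j}|^{3}+\mu_{j}^{3})(\mu_{i}^{2}+\mu_{i}\mu_{j}+\mu_{j}^{2})}{2(|\mu_{i}|^{3}+|\mu_{j}|^{3})}
\]
for all \(\mu_{i},\mu_{j}\neq 0\). This I would do by case analysis on signs: if both are positive, the factorisation \(\mu_{i}^{3}-\mu_{j}^{3}=(\mu_{i}-\mu_{j})(\mu_{i}^{2}+\mu_{i}\mu_{j}+\mu_{j}^{2})\) reduces the right-hand side to the correct divided difference; if both are negative, both factors \(|\mu|^{3}+\mu^{3}\) vanish and the identity reads \(0=0\); the mixed case retains only the positive eigenvalue in the numerator and yields \([\mu_{i}]_{+}^{3}/(\mu_{i}-\mu_{j})\); and the diagonal case \(i=j\) reproduces \(f'(\mu_{i})=3[\mu_{i}]_{+}^{2}\). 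The case \(X=O\) is handled directly: every eigenvalue is \(0\), every entry of \(\Phi\) equals \(f'(0)=0\), and so \(\D \mathcal{Q}(O)H = O\).

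The conceptual hurdle is legitimising the Daleckii--Krein formula at matrices whose spectrum straddles the kink of \([\,\cdot\,]_{+}\). The resolution is that cubing absorbs the nonsmoothness: although \([\,\cdot\,]_{+}\) itself is only Lipschitz, \(f(t)=[t]_{+}^{3}\) is genuinely \(C^{2}\), so the L\"owner operator inherits this regularity globally, with no eigenvalue-separation hypothesis needed. This is precisely what will let us conclude, in the subsequent step, that the proposed penalty function \(F\) is twice continuously differentiable, which is the whole point of the construction of Section~\ref{sec:penalty}.
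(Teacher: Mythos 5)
Your proposal is correct, but it takes a genuinely different route from the paper. The paper argues from first principles: it writes $[X]_{+}^{3} = \tfrac{1}{2}(X^{3} + |X|^{3})$, reduces the problem to differentiating $\Phi(X) = |X|^{3} = \sqrt{X^{6}}$, and carries out an explicit first-order perturbation expansion block by block, invoking a lemma of Tseng on the derivative of the matrix square root and a separate positive-semidefiniteness argument to show that the $(\J,\J)$ block of the increment is $o(\Vert H \Vert_{{\rm F}})$; continuity of $\D\mathcal{Q}$ is then established in a separate lemma. You instead identify $\mathcal{Q}$ as the L\"owner (primary matrix) operator of the scalar function $f(t) = [t]_{+}^{3}$, observe that $f \in C^{2}(\R)$, and invoke the standard differentiability theorem with the first-divided-difference (Daleckii--Krein) formula, after which the lemma reduces to the entrywise identities you verify by sign analysis --- all of which check out, including $B_{ii} = \tfrac{1}{2}(|\mu_i|+\mu_i)\mu_i = [\mu_i]_{+}^{3}/\mu_i$ and the four-case verification of $A_{ij} = f[\mu_i,\mu_j]$. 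Your route is considerably shorter and delivers continuous differentiability for free (subsuming the paper's Lemma on continuity of $\D\mathcal{Q}$), at the cost of resting on a nontrivial external theorem; you should cite a precise version valid under $f \in C^{1}$ (e.g., Bhatia, \emph{Matrix Analysis}, Theorem~V.3.3, or Chen--Qi--Tseng), since the classical Daleckii--Krein statement is sometimes given only for $f \in C^{2}$ --- though here $f$ is in fact $C^{2}$, so even the classical form applies. The paper's computation, by contrast, is self-contained and exposes the perturbation structure it reuses later. One small point worth a sentence in a final write-up: the divided-difference formula is independent of the chosen orthogonal eigendecomposition because $f[\mu_i,\mu_j]$ is constant on blocks of repeated eigenvalues, so the stated block formula is well defined.
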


\begin{proof}
To begin with, it can be easily verified that $\mathcal{Q}$ is differentiable at $X=O$. Moreover, the derivative of $\mathcal{Q}$ at $X=O$ satisfies that $\D \mathcal{Q}(X) H = O$ for any $H \in \S^{d}$. Therefore, we discuss the case where $X$ is a nonzero matrix. Note that $[X]_{+} = \frac{1}{2}(X + |X|)$. Then, we have $[X]_{+}^{3} = \frac{1}{8}(4 X^{3} + 4 |X|^{3}) = \frac{1}{2}(X^{3} + |X|^{3})$ for any $X \in \S^{d}$. In what follows, we consider the derivative of the function $\Phi \colon \S^{d} \to \S^{d}$ defined by $\Phi(X) \coloneqq |X|^3$. Let $X \in \mathbb{S}^{d} \backslash \{ O \}$ be arbitrary matrix. Without loss of generality, we suppose that a diagonal matrix $D$ and an orthogonal matrix $P$ satisfy
\begin{align} \label{eq:X_PDP}
\begin{gathered}
X = P D P^{\top}, \quad P = 
\begin{bmatrix}
P_{\I} & P_{\J}
\end{bmatrix}
, \quad D =
\begin{bmatrix}
\Lambda & O
\\
O & O
\end{bmatrix}
, \quad | \Lambda | \in \S_{++}^{\ell}, \quad P_{\I} \in \R^{d \times \ell}, \quad P_{\J} \in \R^{d \times (d - \ell)},
\end{gathered}
\end{align}
where if $X$ is nonsingular, we should consider $P = P_{\I}$ and $D = \Lambda$. As described below, the following proof also holds in such a case. 
Let us take $H \in \mathbb{S}^{d}$ arbitrarily and define
\begin{align}
Z \coloneqq P^{\top} (\Phi(X + H) - \Phi(X)) P, \quad K \coloneqq P^{\top}  H P. \label{def:ZandK}
\end{align}
Using~\eqref{eq:X_PDP} and~\eqref{def:ZandK} yields
\begin{align}
Z
&= P^{\top} |X + H|^{3} P - P^{\top} |X|^{3} P \nonumber
\\
&= |P^{\top} (X + H) P|^{3} - |P^{\top} X^{3} P| \nonumber
\\
&= \sqrt{ (D + K)^{6} } - \sqrt{ D^{6} } \nonumber
\\
&= \sqrt{ D^{6} + R + S + T } - \sqrt{D^{6}}, \label{eq:Z_DRST}
\end{align}
where 
\begin{align}
\begin{aligned} \label{def:RST}
R &\coloneqq D^5 K + K D^5 + D^4 K D + D K D^4 + D^3 K D^2 + D^2 K D^3,
\\
S &\coloneqq D^4 K^2 + K^2 D^4 + D^3 K D K + K D K D^3 + D^3 K^2 D + D K^2 D^3 + D^2 K D^2 K + K D^2 K D^2
\\
&\quad \quad  + D^2 K D K D + D K D K D^2 + D K D^3 K + K D^3 K D + D^2 K^2 D^2 + K D^4 K + D K D^2 K D,
\\
T &\coloneqq (D + K)^{6} -D^6 - R - S.
\end{aligned}
\end{align}
From these definitions, we have
\begin{align}
&R =
\begin{bmatrix}
\Lambda^5 K_{\I\I} + K_{\I\I} \Lambda^5 + \Lambda^4 K_{\I\I} \Lambda + \Lambda K_{\I\I} \Lambda^4 + \Lambda^3 K_{\I\I} \Lambda^2 + \Lambda^2 K_{\I\I} \Lambda^3 & \Lambda^5 K_{\I\J}
\\
K_{\I\J}^{\top} \Lambda^5 & O
\\
\end{bmatrix}
, \label{eq:block_R}
\\
&S_{\J\J} = K_{\I\J}^{\top} \Lambda^4 K_{\I\J}. \label{eq:S_JJblock}
\end{align}
Since $\Vert K \Vert_{{\rm F}} = \Vert P^{\top} H P \Vert_{{\rm F}} = \Vert H \Vert_{{\rm F}}$ holds, the second and third definitions of~\eqref{def:RST} imply
\begin{align}
S = o(\Vert H \Vert_{{\rm F}}), \quad T = o(\Vert H \Vert_{{\rm F}}^{2}) \quad (H \to O). \label{eq:ST_bigO}
\end{align}
Now, we denote 
\begin{align}
C \coloneqq |D|^3, \quad W \coloneqq R + S + T. \label{def:CandW}
\end{align}
It then follows from~\eqref{eq:Z_DRST}, \eqref{eq:block_R}, and \eqref{eq:S_JJblock} that
\begin{align}
&Z = \sqrt{C^2 + W} - C, \label{eq:Z}
\\
&W_{\I\I} = \Lambda^5 K_{\I\I} + K_{\I\I} \Lambda^5 + \Lambda^4 K_{\I\I} \Lambda + \Lambda K_{\I\I} \Lambda^4 + \Lambda^3 K_{\I\I} \Lambda^2 + \Lambda^2 K_{\I\I} \Lambda^3 + S_{\I\I} + T_{\I\I}, \label{eq:W_II}
\\
&W_{\I\J} = \Lambda^5 K_{\I\J} + S_{\I\J} + T_{\I\J}, \label{eq:W_IJ}
\\
&W_{\J\J} = K_{\I\J}^{\top} \Lambda^4 K_{\I\J} + T_{\J\J}. \label{eq:W_JJ}
\end{align}
Moreover, using~\cite[Lemma~6.2]{Ts98} implies
\begin{align}
Z_{\I\I} = L_{|\Lambda|^3}^{-1}(W_{\I\I}) + o(\Vert W \Vert_{{\rm F}}) \quad (W \to O),  \label{ineq:Z_II}
\\
Z_{\I\J} = |\Lambda|^{-3} W_{\I\J} + o(\Vert W \Vert_{{\rm F}}) \quad (W \to O). \label{ineq:Z_IJ}
\end{align}
where $L_{|\Lambda|^3} \colon \mathbb{S}^{d-\ell} \to \mathbb{S}^{d-\ell}$ is a linear operator defined by $L_{|\Lambda|^3}(M) \coloneqq |\Lambda|^3 M + M |\Lambda|^3$ for any $M \in \mathbb{S}^{d-\ell}$, and $L^{-1}_{|\Lambda|^3}$ denotes its inverse operator.
On the other hand, we have from~\eqref{eq:Z} that
\begin{align*}
W &= (Z + C)^2 - C^2
\\
&=
\begin{bmatrix}
Z_{\I\I}^2 + Z_{\I\J} Z_{\I\J}^{\top} + Z_{\I\I} |\Lambda|^3 + |\Lambda|^3Z_{\I\I} & Z_{\I\I}Z_{\I\J} + Z_{\I\J}Z_{\J\J} + |\Lambda|^3 Z_{\I\J}
\\
Z_{\I\J}^{\top}Z_{\I\I} + Z_{\J\J} Z_{\I\J}^{\top} + Z_{\I\J}^{\top}|\Lambda|^3  & Z_{\I\J}^{\top}Z_{\I\J} + Z_{\J\J}^2,
\end{bmatrix}
,
\end{align*}
and hence
\begin{align}
W_{\J\J} = Z_{\I\J}^{\top} Z_{\I\J} + Z_{\J\J}^2. \label{eq:W_JJ2}
\end{align}
Now, we recall that $W = R + S + T$ from the second equality of~\eqref{def:CandW}. Then, by noting~\eqref{def:RST}, there exists $b > 0$ such that $\Vert W \Vert_{{\rm F}} \leq b \Vert K \Vert_{{\rm F}} = b \Vert H \Vert_{{\rm F}}$. Thus, it is clear that $H \to O$ implies $W \to O$. According to~\eqref{eq:W_II} and~\eqref{eq:W_IJ}, we obtain $W_{\I\I} = \Lambda^5 K_{\I\I} + K_{\I\I} \Lambda^5 + \Lambda^4 K_{\I\I} \Lambda + \Lambda K_{\I\I} \Lambda^4 + \Lambda^3 K_{\I\I} \Lambda^2 + \Lambda^2 K_{\I\I} \Lambda^3 + S_{\I\I} + T_{\I\I}$ and $W_{\I\J} = \Lambda^5 K_{\I\J} + S_{\I\J} + T_{\I\J}$. It then follows from~\eqref{eq:ST_bigO},~\eqref{ineq:Z_II},~and~\eqref{ineq:Z_IJ} that
\begin{align*}
&\frac{\Vert Z_{\I\I} - L_{|\Lambda|^3}^{-1}(\Lambda^5 K_{\I\I} + K_{\I\I} \Lambda^5 + \Lambda^4 K_{\I\I} \Lambda + \Lambda K_{\I\I} \Lambda^4 + \Lambda^3 K_{\I\I} \Lambda^2 + \Lambda^2 K_{\I\I} \Lambda^3) \Vert_{{\rm F}}}{\Vert H \Vert_{{\rm F}}} 
\\
&\leq \frac{\Vert Z_{\I\I} - L_{|\Lambda|^3}^{-1}(W_{\I\I}) \Vert_{{\rm F}}}{\Vert H \Vert_{{\rm F}}} + \frac{\Vert L_{|\Lambda|^3}^{-1}(S_{\I\I} + T_{\I\I}) \Vert_{{\rm F}}}{\Vert H \Vert_{{\rm F}}} 
\\
&\leq \frac{b \Vert Z_{\I\I} - L_{|\Lambda|^3}^{-1}(W_{\I\I}) \Vert_{{\rm F}}}{\Vert W \Vert_{{\rm F}}} + \frac{\Vert L_{|\Lambda|^3}^{-1} \Vert ( \Vert S \Vert_{{\rm F}} + \Vert T \Vert_{{\rm F}})}{\Vert H \Vert_{{\rm F}}} \to O \quad  (H \to O),
\end{align*}
and
\begin{align*}
\frac{\Vert Z_{\I\J} - |\Lambda|^{-3} \Lambda^5 K_{\I\J} \Vert_{{\rm F}}}{\Vert H \Vert_{{\rm F}}} 
&\leq \frac{\Vert Z_{\I\J} - |\Lambda|^{-3} W_{\I\J} \Vert_{{\rm F}}}{\Vert H \Vert_{{\rm F}}} + \frac{\Vert |\Lambda|^{-3} (S_{\I\J} + T_{\I\J}) \Vert_{{\rm F}}}{\Vert H \Vert_{{\rm F}}}
\\
&\leq \frac{b\Vert Z_{\I\J} - |\Lambda|^{-3} W_{\I\J} \Vert_{{\rm F}}}{\Vert W \Vert_{{\rm F}}} + \frac{\Vert \Lambda^{-3} \Vert_{{\rm F}} ( \Vert S \Vert_{{\rm F}} + \Vert T \Vert_{{\rm F}})}{\Vert H \Vert_{{\rm F}}}  \to O \quad  (H \to O),
\end{align*}
that is
\begin{align*}
Z_{\I\I} - L_{|\Lambda|^3}^{-1}(\Lambda^5 K_{\I\I} + K_{\I\I} \Lambda^5 + \Lambda^4 K_{\I\I} \Lambda + \Lambda K_{\I\I} \Lambda^4 + \Lambda^3 K_{\I\I} \Lambda^2 + \Lambda^2 K_{\I\I} \Lambda^3) = o(\Vert H \Vert_{{\rm F}}) \quad (H \to O),
\\
Z_{\I\J} - |\Lambda| \Lambda K_{\I\J} = o(\Vert H \Vert_{{\rm F}}) \quad (H \to O),
\end{align*}
where note that $|\Lambda|^{-3} \Lambda^5 = |\Lambda| \Lambda$. From these results, there exist matrix-valued functions $U_1 \colon \mathbb{S}^{d} \to \mathbb{S}^{\ell}$ and $U_2 \colon \mathbb{S}^{d} \to \mathbb{R}^{\ell \times (d - \ell)}$ such that
\begin{align}
Z_{\I\I} = L_{|\Lambda|^3}^{-1}(\Lambda^5 K_{\I\I} + K_{\I\I} \Lambda^5 + \Lambda^4 K_{\I\I} \Lambda + \Lambda K_{\I\I} \Lambda^4 + \Lambda^3 K_{\I\I} \Lambda^2 + \Lambda^2 K_{\I\I} \Lambda^3) + \Vert H \Vert_{{\rm F}} U_1(H), \label{eq:Z_II}
\\
Z_{\I\J} = |\Lambda| \Lambda K_{\I\J} + \Vert H \Vert_{{\rm F}} U_2(H),  \label{eq:Z_IJ}
\\
U_1(H) \to O, \quad U_2(H) \to O \quad (H \to O). \label{limit:U1U2}
\end{align}
Now, using~\eqref{eq:Z_IJ} implies
\begin{align}
Z_{\I\J}^{\top} Z_{\I\J} 
&= K_{\I\J}^{\top} \Lambda |\Lambda|^2 \Lambda K_{\I\J} + \Vert H \Vert_{{\rm F}} K_{\I\J}^{\top}\Lambda |\Lambda| U_2(H) + \Vert H \Vert_{{\rm F}} U_2(H)^{\top} |\Lambda| \Lambda K_{\I\J} + \Vert H \Vert_{{\rm F}}^2 U_2(H)^{\top} U_2(H) \nonumber
\\
& = K_{\I\J}^{\top} \Lambda^4 K_{\I\J} + \Vert H \Vert_{{\rm F}} K_{\I\J}^{\top} \Lambda |\Lambda| U_2(H) + \Vert H \Vert_{{\rm F}} U_2(H)^{\top} |\Lambda| \Lambda K_{\I\J} + \Vert H \Vert_{{\rm F}}^2 U_2(H)^{\top} U_2(H). \label{eq:Z_IJ_U}
\end{align}
By exploiting the second equality of~\eqref{eq:ST_bigO}, there exists a matrix-valued function $U_3 \colon \mathbb{S}^{d} \to \mathbb{S}^{d}$ such that
\begin{align}
T = \Vert H \Vert_{{\rm F}}^2 U_3(H), \quad U_3(H) \to O ~ (H \to O). \label{eq:T_o}
\end{align}
Meanwhile, equalities~\eqref{eq:W_JJ} and~\eqref{eq:W_JJ2} lead to $Z_{\J\J}^2 = W_{\J\J} - Z_{\I\J}^{\top} Z_{\I\J} = K_{\I\J}^{\top} \Lambda^4 K_{\I\J} +T_{\J\J} - Z_{\I\J}^{\top} Z_{\I\J}$. This fact,~\eqref{eq:Z_IJ_U}, and \eqref{eq:T_o} derive
\begin{align}
\frac{ \Vert Z_{\J\J}^2 \Vert_{{\rm F}} }{ \Vert H \Vert_{{\rm F}}^2 } 
&= \frac{ \Vert T_{\J\J} - \Vert H \Vert_{{\rm F}} K_{\I\J}^{\top} \Lambda |\Lambda| U_2(H) - \Vert H \Vert_{{\rm F}} U_2(H)^{\top} |\Lambda| \Lambda K_{\I\J} - \Vert H \Vert_{{\rm F}}^2 U_2(H)^{\top} U_2(H) \Vert_{{\rm F}} }{\Vert H \Vert_{{\rm F}}^2} \nonumber
\\
&\leq \frac{ \Vert T \Vert_{{\rm F}} + 2\Vert H \Vert_{{\rm F}}^2 \Vert \Lambda \Vert_{{\rm F}}^2 \Vert U_2(H) \Vert_{{\rm F}} + \Vert H \Vert_{{\rm F}}^2 \Vert U_2(H) \Vert_{{\rm F}}^2 }{\Vert H \Vert_{{\rm F}}^2} \nonumber
\\
&\leq \Vert U_3(H) \Vert_{{\rm F}} + 2 \Vert \Lambda \Vert_{{\rm F}}^2 \Vert U_2(H) \Vert_{{\rm F}} + \Vert U_2(H) \Vert_{{\rm F}}^2, \label{ineq:Z_JJ_2}
\end{align}
where note that $\Vert T_{\J\J} \Vert_{{\rm F}} \leq \Vert T \Vert_{{\rm F}}$ and $\Vert K_{\I\J} \Vert_{{\rm F}} \leq \Vert K \Vert_{{\rm F}} = \Vert H \Vert_{{\rm F}}$. Combining~\eqref{limit:U1U2},~\eqref{eq:T_o}, and~\eqref{ineq:Z_JJ_2} yields $Z_{\J\J}^2 = o(\Vert H \Vert_{{\rm F}}^2)~(H \to O)$. Namely, there exists a matrix-valued function $U_4 \colon \mathbb{S}^{d} \to \mathbb{S}^{(d-\ell)}$ such that 
\begin{align}
Z_{\J\J}^2 = \Vert H \Vert_{{\rm F}}^2 U_4(H), \quad U_4(H) \succeq O, \quad U_4(H) \to O~(H \to O). \label{eq:Z_JJ_U4}
\end{align}
Since $Z + C = \sqrt{C^2 + W}$ from~\eqref{eq:Z}, we have
\begin{align}
\begin{bmatrix}
Z_{\I\I} + |\Lambda|^3 & Z_{\I\J}
\\
Z_{\I\J}^{\top} & Z_{\J\J}
\end{bmatrix}
= \sqrt{C^2 + W} \succeq O \quad \Longrightarrow \quad  Z_{\J\J} \succeq O. \label{eq:Z_JJ_psd}
\end{align}
We have from~\eqref{eq:Z_JJ_U4} and~\eqref{eq:Z_JJ_psd} that
\begin{align}
Z_{\J\J} = \Vert H \Vert_{{\rm F}} \sqrt{U_{4}(H)}. \label{eq:Z_JJ}
\end{align}
By using~\eqref{eq:Z_II}, \eqref{eq:Z_IJ}, and~\eqref{eq:Z_JJ} the matrix $Z$ can be represented as follows:
\begin{align}
Z &=
\begin{bmatrix}
Z_{\I\I} & Z_{\I\J}
\\
Z_{\I\J}^{\top} & Z_{\J\J}
\end{bmatrix}
\nonumber
\\
&=
\begin{bmatrix}
L_{|\Lambda|^3}^{-1}(\Lambda^5 K_{\I\I} + K_{\I\I} \Lambda^5 + \Lambda^4 K_{\I\I} \Lambda + \Lambda K_{\I\I} \Lambda^4 + \Lambda^3 K_{\I\I} \Lambda^2 + \Lambda^2 K_{\I\I} \Lambda^3) & |\Lambda| \Lambda K_{\I\J}
\\
K_{\I\J}^{\top} \Lambda |\Lambda| & O
\end{bmatrix}
+ \Vert H \Vert_{{\rm F}} U(H), \label{eq:Z_submat}
\end{align}
where
\begin{align} \label{def:matU}
U(H) \coloneqq
\begin{bmatrix}
U_1(H) & U_2(H)
\\
U_2(H)^{\top} & \sqrt{U_4(H)}
\end{bmatrix}
. 
\end{align}
Now, we define a linear operator ${\cal L}_{X} \colon \mathbb{S}^{d} \to \mathbb{S}^{d}$ as
\begin{align} \label{def:operL}
\begin{aligned}
{\cal L}_{X}(H) &\coloneqq P
\begin{bmatrix}
{\cal L}_{\I\I}(H) & {\cal L}_{\I\J}(H)
\\
{\cal L}_{\I\J}(H)^{\top} & O
\end{bmatrix}
P^{\top},
\\
{\cal L}_{\I\I}(H) &\coloneqq L_{|\Lambda|^3}^{-1}(\Lambda^5 P_{\I}^{\top} H P_{\I} + P_{\I}^{\top} H P_{\I} \Lambda^5 + \Lambda^4 P_{\I}^{\top} H P_{\I} \Lambda + \Lambda P_{\I}^{\top} H P_{\I} \Lambda^4 + \Lambda^3 P_{\I}^{\top} H P_{\I} \Lambda^2 + \Lambda^2 P_{\I}^{\top} H P_{\I} \Lambda^3),
\\
{\cal L}_{\I\J}(H) &\coloneqq  |\Lambda| \Lambda P_{\I}^{\top} H P_{\J}.
\end{aligned}
\end{align}
Recall that $P = [P_{\I} ~ P_{\J}]$ from~\eqref{eq:X_PDP}. Then, noting the second equality of~\eqref{def:ZandK} implies $K_{\I\I} = P_{\I}^{\top} H P_{\I}$, $K_{\I\J} = P_{\I}^{\top} H P_{\J}$, and $K_{\J\J} = P_{\J}^{\top} H P_{\J}$. It then follows from~\eqref{eq:Z_submat} and~\eqref{def:operL} that $Z = P^{\top} {\cal L}_{X}(H) P + \Vert H \Vert_{{\rm F}} U(H)$. This equality and the first equality of~\eqref{def:ZandK} derive
\begin{align}
\Phi(X + H) - \Phi(X) = PZP^{\top} = {\cal L}_{X}(H) + \Vert H \Vert_{{\rm F}} PU(H)P^{\top}. \label{eq:PhiXHX}
\end{align}
On the other hand, we obtain $U(H) \to O~(H \to O)$ by~\eqref{limit:U1U2}, \eqref{eq:Z_JJ_U4}, and~\eqref{def:matU}. Then, exploiting~\eqref{eq:PhiXHX} leads to $\Phi(X+H) = \Phi(X) + {\cal L}_X(H) + o(\Vert H \Vert_{{\rm F}}) ~ (H \to O)$. From the above proof, notice that this result is also true in the case where $X$ is nonsingular, that is, $P = P_{\I}$ and $D = \Lambda$ in~\eqref{eq:X_PDP}. Consequently, the matrix-valued function $\Phi$ is differentiable, and $\D \Phi(X) H = {\cal L}_{X} (H)$ holds.
\par
Since the function ${\cal Q}(X) = [X]_{+}^3$ is represented as ${\cal Q}(X) = \frac{1}{2}(X^3 + \Phi(X))$, it is differentiable and its derivative is given by
\begin{align}
{\rm D} {\cal Q}(X) H = \frac{1}{2}(X^2 H + H X^2 + XHX + {\cal L}_{X}(H)). \label{eq:DiffPhi}
\end{align}
Now, we denote by $\mu_{j} \in \R$ the $(j, j)$-entry of the diagonal matrix $\Lambda \in \S^{d}$. Since $| \Lambda | \succ O$ is satisfied from~\eqref{eq:X_PDP}, it is clear that $\mu_{j} \not = 0$ for all $j \in \{ 1, \ldots, \ell \}$. By using $\mu_{1}, \ldots, \mu_{\ell}$, the inverse operator $L_{| \Lambda |^{3}}^{-1} \colon \S^{\ell} \to \S^{\ell}$ can be calculated as follows: $L_{|\Lambda|^{3}}^{-1}(M) = [ M_{ij}(|\mu_{i}|^{3} + |\mu_{j}|^{3})^{-1} ]_{ij}$ for any $M \in \S^{d}$. Then, combining~\eqref{eq:X_PDP},~\eqref{def:operL}, and~\eqref{eq:DiffPhi} implies the desired results. 
\end{proof}

\noindent
Lemma~\ref{lem:derivative_Q} shows the differentiability of $\mathcal{Q}$, but does not ensure that it is continuously differentiable. Hence, we also show the continuity of $\D \mathcal{Q}$.

\begin{lemma} \label{leem:continuity_DQ}
The derivative $\D \mathcal{Q}$ is continuous on $\S^{d}$.
\end{lemma}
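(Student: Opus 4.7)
I will verify continuity of $\D\mathcal{Q}$ at each $\bar{X} \in \S^{d}$ by exploiting the explicit formula from Lemma~\ref{lem:derivative_Q}. For $\bar{X} = O$, direct inspection of the formula suffices: for any nonzero $X$ with nonzero eigenvalues $\mu_{1},\ldots,\mu_{\ell}$, one easily checks $|A_{ij}| \leq \frac{3}{2}(\mu_{i}^{2} + \mu_{j}^{2})$ and $|B_{ii}| = [\mu_{i}]_{+}\mu_{i} \leq \mu_{i}^{2}$, whence $\Vert\D\mathcal{Q}(X)\Vert = O(\Vert X \Vert_{{\rm F}}^{2}) \to 0$ as $X \to O$, matching $\D\mathcal{Q}(O) = 0$.

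For $\bar{X} \neq O$, I would first recast the formula in a spectrally natural form. A direct computation shows that $A_{ij}$ coincides with the divided difference of $f(t) \coloneqq [t]_{+}^{3}$:
\begin{align*}
A_{ij} = f[\mu_{i}, \mu_{j}] \coloneqq \frac{[\mu_{i}]_{+}^{3} - [\mu_{j}]_{+}^{3}}{\mu_{i} - \mu_{j}} \quad (\mu_{i} \neq \mu_{j}), \qquad A_{ii} = f'(\mu_{i}) = 3[\mu_{i}]_{+}^{2},
\end{align*}
while $B_{ii} = [\mu_{i}]_{+}^{2} = f[\mu_{i}, 0]$. Combining the two block contributions, $\D\mathcal{Q}(X)H$ acts on $P^{\top}HP$ as Hadamard multiplication by the full $d \times d$ symmetric matrix $[f[\mu_{i}, \mu_{j}]]_{ij}$, where the eigenvalues assigned to the $\J$-block are set to zero. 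Since $f \in C^{2}(\R)$, the divided-difference function $(s,t) \mapsto f[s,t]$ is jointly continuous on $\R^{2}$.

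The remaining obstacle is that the spectral data $(P, D)$ of $X$ are not continuous in $X$ when eigenvalues collide or cross zero. I would handle this by a subsequence compactness argument: given $X_{n} \to \bar{X}$, pass to a subsequence along which the eigenvalues (in decreasing order) converge to those of $\bar{X}$ and the associated orthogonal matrices $P_{n}$ converge to some orthogonal $\bar{P}$ diagonalizing $\bar{X}$ (invoking compactness of the orthogonal group). Entrywise continuity of the divided differences then yields $\D\mathcal{Q}(X_{n})H \to \D\mathcal{Q}(\bar{X})H$ for every fixed $H$. To promote this subsequential convergence to convergence of the full sequence, one checks that the limit depends only on $\bar{X}$ and $H$ (not on the particular choice of $\bar{P}$): within any eigenspace of $\bar{X}$, including $\ker\bar{X}$, all entries of the divided-difference matrix equal $f'(\bar{\mu})$, so the Hadamard action reduces to scalar multiplication, which is invariant under rotation of the orthonormal basis of that eigenspace. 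This basis-invariance step, combined with the need to reconcile the $A$- and $B$-block formulas across the transition between positive and zero eigenvalues, is the main technical point; the identity $B_{ii} = f[\mu_{i}, 0]$ is precisely what makes the $\I$- and $\J$-block expressions merge coherently in the limit.
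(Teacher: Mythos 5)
Your proposal is correct, and its skeleton coincides with the paper's: both rest on the representation $\D \mathcal{Q}(X)H = P\,( C(X) \circ P^{\top} H P )\,P^{\top}$, both prove entrywise convergence of the coefficient matrix, and both must then neutralize the discontinuity of the eigenvectors. Where you differ is in the packaging of these two steps. You recognize the paper's coefficient matrix $C(X)$ (equivalently, the blocks $A$ and $B$ of Lemma~\ref{lem:derivative_Q}) as the divided-difference matrix $[f[\lambda_{i},\lambda_{j}]]_{ij}$ of $f(t)=[t]_{+}^{3}$ — your identities $A_{ij}=f[\mu_{i},\mu_{j}]$ and $B_{ii}=f[\mu_{i},0]$ do check out against the paper's formulas — so entrywise convergence follows from the joint continuity of $(s,t)\mapsto f[s,t]$ for $f\in C^{1}$, via $f[s,t]=\int_{0}^{1}f'(t+u(s-t))\,du$. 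The paper instead verifies the same convergence by an explicit three-case analysis on the sign patterns of the eigenvalues of $X$ and $X_{k}$; your route is shorter and makes clear that nothing about the specific function $[t]_{+}^{3}$ beyond $C^{1}$-smoothness is used. For the eigenvector issue, the paper groups indices by distinct eigenvalues $\mu_{1}(X)>\cdots>\mu_{\ell}(X)$, uses that $[C(X)]_{ij}$ is constant on each block $\I_{\alpha}\times\I_{\beta}$, and invokes analyticity of the spectral projections $\Gamma_{s}$ to bound the difference of the two Hadamard expressions directly; you reach the same block-constancy but exploit it through a subsequence-compactness argument on the orthogonal group plus basis-invariance of $\sum_{\alpha,\beta}c_{\alpha\beta}\Gamma_{\alpha}H\Gamma_{\beta}$. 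Two small points to tighten when writing this up: the invariance you need is constancy of $f[\cdot,\cdot]$ on every block $\I_{\alpha}\times\I_{\beta}$ (including off-diagonal blocks and the kernel block), not only within a single eigenspace, though this is immediate from the divided-difference form; and you should add one line noting that pointwise convergence $\D\mathcal{Q}(X_{k})H\to\D\mathcal{Q}(X)H$ for each $H$ upgrades to operator-norm convergence automatically in finite dimensions (the paper spends a separate compactness argument on this).
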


\begin{proof}
Let $X \in \S^{d}$ be arbitrary, and let $\{ X_{k} \} \subset \S^{d}$ be an arbitrary sequence satisfying $X_{k} \to X~(k \to \infty)$. Now, rcall that $\lambda_{1}(X) \geq \cdots \geq \lambda_{d}(X)$ and $\lambda_{1}(X_{k}) \geq \cdots \geq \lambda_{d}(X_{k})$ for any $k \in \mathbb{N}$. We define $\A \coloneqq \{ j \in \mathbb{N}; \lambda_{j}(X) > 0 \}$, $\B \coloneqq \{ j \in \mathbb{N}; \lambda_{j}(X) = 0 \}$, and $\C \coloneqq \{ j \in \mathbb{N}; \lambda_{j}(X) < 0 \}$. By exploiting the continuity of eigenvalues, there exists $\bar{m} \in \mathbb{N}$ such that 
\begin{align}
\forall k \geq \bar{m}, \quad \forall (i, j) \in \A \times \C, \quad \lambda_{i}(X_{k}) > 0, \quad \lambda_{j}(X_{k}) < 0. \label{eig:Xk_pn}
\end{align}
From now on, we suppose that $k \geq \bar{m}$. We also assume that $X$ and $X_{k}$ have the following matrix decompositions:
\begin{align} \label{eq:XXkPPk}
\begin{gathered}
X = P(X) {\rm diag}(\lambda_{1}(X), \ldots, \lambda_{d}(X)) P(X)^{\top}, \quad P(X) P(X)^{\top} = I, \quad P(X) = \big[ p_{1}(X) \, \cdots \, p_{d}(X) \big],
\\
X_{k} = Q(X_{k}) {\rm diag}(\lambda_{1}(X_{k}), \ldots, \lambda_{d}(X_{k})) Q(X_{k})^{\top}, \quad Q(X_{k}) Q(X_{k})^{\top} = I, \quad Q(X_{k}) = \big[ q_{1}(X_{k}) \, \cdots \, q_{d}(X_{k}) \big],
\end{gathered}
\end{align}
where $p_{j}(X) \in \R^{d}$ and $q_{j}(X_{k}) \in \R^{d}$ for each $j \in \{1, \ldots, d \}$. Let us define $\A_{k} \coloneqq \{ j \in \mathbb{N}; \lambda_{j}(X_{k}) > 0 \}$, $\B_{k} \coloneqq \{ j \in \mathbb{N}; \lambda_{j}(X_{k}) = 0 \}$, and $\C_{k} \coloneqq \{ j \in \mathbb{N}; \lambda_{j}(X_{k}) < 0 \}$. Then, noting~\eqref{eig:Xk_pn} implies $\A \subset \A_{k}$ and $\C \subset \C_{k}$. Hence, we define $\B_{k}^{+} \coloneqq \A_{k} \backslash \A$ and $\B_{k}^{-} \coloneqq \C_{k} \backslash \C$. These definitions derive
\begin{align}
\A_{k} = \A \cup \B_{k}^{+}, \quad \C_{k} = \C \cup \B_{k}^{-}, \quad \B = \B_{k}^{+} \cup \B_{k} \cup \B_{k}^{-}. \label{set:ABCk}
\end{align}
Let $H \in \S^{d}$ be arbitrary. Using~\eqref{eq:XXkPPk} and Lemma~\ref{lem:derivative_Q} yields
\begin{gather}
\begin{gathered} \label{eq:DQXDQXk}
\D \mathcal{Q}(X) H = P(X) ( C(X) \circ P(X)^{\top} H P(X) ) P(X)^{\top}, 
\\
\D \mathcal{Q}(X_{k}) H = Q(X_{k}) ( C(X_{k}) \circ Q(X_{k})^{\top} H Q(X_{k}) ) Q(X_{k})^{\top},
\end{gathered}
\end{gather}
where $C(X) \in \S^{d}$ and $C(X_{k}) \in \S^{d}$ are defined by
\begin{align}
[C(X)]_{ij} & =
\begin{cases} \label{def:matCX}
\lambda_{i}(X)^{2} + \lambda_{i}(X) \lambda_{j}(X) + \lambda_{j}(X)^{2} & {\rm if} ~ i \in \A, ~ j \in \A,
\\
\lambda_{i}(X)^{2} & {\rm if} ~ i \in \A, ~ j \in \B,
\\
\lambda_{i}(X)^{3} ( \lambda_{i}(X) - \lambda_{j}(X) )^{-1} & {\rm if} ~ i \in \A, ~ j \in \C,
\\
\lambda_{j}(X)^{2} & {\rm if} ~ i \in \B, ~ j \in \A,
\\
\lambda_{j}(X)^{3} (\lambda_{j}(X) - \lambda_{i}(X))^{-1} & {\rm if} ~ i \in \C, ~ j \in \A,
\\
0 & {\rm otherwise},
\end{cases}
\\
[C(X_{k})]_{ij} & =
\begin{cases} \label{def:matCXk}
\lambda_{i}(X_{k})^{2} + \lambda_{i}(X_{k}) \lambda_{j}(X_{k}) + \lambda_{j}(X_{k})^{2} & {\rm if} ~ i \in \A_{k}, ~ j \in \A_{k},
\\
\lambda_{i}(X_{k})^{2} & {\rm if} ~ i \in \A_{k}, ~ j \in \B_{k},
\\
\lambda_{i}(X_{k})^{3} ( \lambda_{i}(X_{k}) - \lambda_{j}(X_{k}) )^{-1} & {\rm if} ~ i \in \A_{k}, ~ j \in \C_{k},
\\
\lambda_{j}(X_{k})^{2} & {\rm if} ~ i \in \B_{k}, ~ j \in \A_{k},
\\
\lambda_{j}(X_{k})^{3} (\lambda_{j}(X_{k}) - \lambda_{i}(X_{k}))^{-1} & {\rm if} ~ i \in \C_{k}, ~ j \in \A_{k},
\\
0 & {\rm otherwise},
\end{cases}
\end{align}
respectively.
\par
In the following, we prove that $C(X_{k}) \to C(X)$ as $k \to \infty$. Recall that the last equality of~\eqref{set:ABCk} is satisfied, and hence we obtain
\begin{align}
\forall j \in \B_{k}^{+} \cup \B_{k} \cup \B_{k}^{-} = \B, \quad \lim_{k \to \infty} \lambda_{j}(X_{k}) = \lambda_{j}(X) = 0. \label{eq:B_lambda}
\end{align}
There are three possible cases: (i) $(i, j) \in \A_{k} \times \A_{k}$; (ii) $(i, j) \in (\A_{k} \times \B_{k}) \cup (\B_{k} \times \A_{k})$; (iii) $(i, j) \in (\A_{k} \times \C_{k}) \cup (\C_{k} \times \A_{k})$. By noting~\eqref{set:ABCk},~\eqref{def:matCX},~\eqref{def:matCXk}, and $\lambda_{j}(X_{k}) \to \lambda_{j}(X)~(k \to \infty)$ for any $j \in \{1, \ldots, d\}$, we discuss these cases:
\begin{description}
\item[Case~(i)] If $(i, j) \in \A \times \A$, it is clear that $[C(X_{k})]_{ij} \to [C(X)]_{ij}~(k \to \infty)$. If $(i, j) \in \A \times \B_{k}^{+}$ or $(i, j) \in \B_{k}^{+} \times \A$, then $(i, j) \in \A \times \B$ or $(i, j) \in \B \times \A$ holds, and using~\eqref{eq:B_lambda} derives $[C(X_{k})]_{ij} \to [C(X)]_{ij}~(k \to \infty)$. Finally, if $(i, j) \in \B_{k}^{+} \times \B_{k}^{+}$, then $(i, j) \in \B \times \B$, and thus again using~\eqref{eq:B_lambda} leads to $[C(X_{k})]_{ij} \to [C(X)]_{ij}~(k \to \infty)$.

\item[Case~(ii)] For this case, we readily have $[C(X_{k})]_{ij} \to [C(X)]_{ij}~(k \to \infty)$.

\item[Case~(iii)] To begin with, we consider the case where $(i, j) \in \A_{k} \times \C_{k}$. For this case, there occur the following four cases: $(i, j) \in \A \times \C$, $(i, j) \in \A \times \B_{k}^{-}$, $(i, j) \in \B_{k}^{+} \times \C$, and $(i, j) \in \B_{k}^{+} \times \B_{k}^{-}$. The first case directly implies $[C(X_{k})]_{ij} \to [C(X)]_{ij}~(k \to \infty)$. For the second case, we see $\lambda_{i}(X_{k}) > 0$, $\lambda_{j}(X_{k}) < 0$, and $(i, j) \in \A \times \B$. Hence, we have from~\eqref{eq:B_lambda} that
\begin{align*}
[C(X_{k})]_{ij} = \frac{\lambda_{i}(X_{k})^{3}}{\lambda_{i}(X_{k}) - \lambda_{j}(X_{k})} \to \frac{\lambda_{i}(X)^{3}}{\lambda_{i}(X)} = \lambda_{i}(X)^{2} = [C(X)]_{ij} ~~ (k \to \infty).
\end{align*}
The third and fourth cases show that $\lambda_{i}(X_{k}) > 0$, $\lambda_{j}(X_{k}) < 0$, and $[C(X)]_{ij} = 0$. It then follows from $\lambda_{i}(X_{k}) < \lambda_{i}(X_{k}) - \lambda_{j}(X_{k})$ that
\begin{align*}
0 < \frac{\lambda_{i}(X_{k})^{3}}{\lambda_{i}(X_{k}) - \lambda_{j}(X_{k})} < \frac{\lambda_{i}(X_{k})^{3}}{\lambda_{i}(X_{k})} = \lambda_{i}(X_{k})^{2} \to 0 ~~ (k \to \infty),
\end{align*}
that is, $[C(X_{k})]_{ij} = \lambda_{i}(X_{k})^{3} ( \lambda_{i}(X_{k}) - \lambda_{j}(X_{k}) )^{-1} \to 0 = [C(X)]_{ij} ~ (k \to \infty)$. The case of $(i, j) \in \C_{k} \times \A_{k}$ can be shown in a way similar to the case of $(i, j) \in \A_{k} \times \C_{k}$. 
\end{description}
Note that the other cases imply $[C(X)]_{ij} = [C(X_{k})]_{ij} = 0$. Therefore, these arguments ensure $C(X_{k}) \to C(X) ~ (k \to \infty)$. 
\par
Next, we show that 
\begin{align}
\lim_{k \to \infty} \Vert \D \mathcal{Q} (X_{k}) H - \D \mathcal{Q} (X) H \Vert_{{\rm F}} = 0. \label{lim:DQXkH}
\end{align}
Let $\mu_{s}(X)$ be the $s$-th largest eigenvalue of $X$. Suppose that the smallest eigenvalue of $X$ is $\mu_{\ell}(X)$, that is, $\mu_{1}(X) > \cdots > \mu_{\ell}(X)$. We define $\I_{s} \coloneqq \{ i \in \mathbb{N}; \, \lambda_{i}(X) = \mu_{s}(X) \}$ for each $s \in \{ 1, \ldots, \ell \}$. Note that $\I_{i} \cap \I_{j} = \emptyset$ for $i \not = j$ because $\mu_{1}(X) > \cdots > \mu_{\ell}(X)$. Let $Y \in \S^{d}$ be an arbitrary matrix in a neighborhood of $X$, and let $\Gamma_{s}(Y)$ be the projection matrix onto the eigenspace corresponding to the eigenvalues $\lambda_{i}(Y)$ for all $i \in \I_{s}$. It is known that each $\Gamma_{s}$ is an analytic function in a sufficiently small neighborhood of $X$. Hence, we assume that all the matrix-valued functions $\Gamma_{1}, \ldots, \Gamma_{\ell}$ are analytic in $B(X, \delta)$, where $\delta > 0$ is some constant. Since $\{ X_{k} \}$ converges to $X$, there exists $\bar{n} \geq \bar{m}$ such that $X_{k} \in B(X, \delta)$ for each $k \geq \bar{n}$, where $\bar{m}$ is the constant defined in~\eqref{eig:Xk_pn}. From now on, we suppose $k \geq \bar{n}$. Recall that $P(X) = \big[ p_{1}(X) \, \cdots \, p_{d}(X) \big]$ and $Q(X_{k}) = \big[ q_{1}(X_{k}) \, \cdots \, q_{d}(X_{k}) \big]$ from~\eqref{eq:XXkPPk}. It then follows from~\eqref{eq:DQXDQXk} that
\begin{align}
\D \mathcal{Q}(X_{k}) H - \D \mathcal{Q}(X) H 
&= \sum_{i=1}^{d} \sum_{j=1}^{d} \Big( [C(X_{k})]_{ij} - [C(X)]_{ij} \Big)  q_{i}(X_{k}) q_{i}(X_{k})^{\top} H q_{j}(X_{k}) q_{j}(X_{k})^{\top} \nonumber
\\
& \qquad + \sum_{i=1}^{d} \sum_{j=1}^{d} [C(X)]_{ij} q_{i}(X_{k}) q_{i}(X_{k})^{\top} H \Big( q_{j}(X_{k}) q_{j}(X_{k})^{\top} - p_{j}(X) p_{j}(X)^{\top} \Big) \nonumber
\\
& \qquad \qquad + \sum_{i=1}^{d} \sum_{j=1}^{d} [C(X)]_{ij} \Big( q_{i}(X_{k}) q_{i}(X_{k})^{\top} - p_{i}(X) p_{i}(X)^{\top} \Big) H p_{j}(X) p_{j}(X)^{\top}. \label{ineq:DQH}
\end{align}
Now, we evaluate each term of the right-hand side of the above. The first term can be evaluated as follows:
\begin{align}
\left\Vert \sum_{i=1}^{d} \sum_{j=1}^{d} \Big( [C(X_{k})]_{ij} - [C(X)]_{ij} \Big)  q_{i}(X_{k}) q_{i}(X_{k})^{\top} H q_{j}(X_{k}) q_{j}(X_{k})^{\top} \right\Vert_{{\rm F}}
 \leq d \Vert H \Vert _{{\rm F}} \Vert C(X_{k}) - C(X) \Vert_{{\rm F}}, \label{ineq:term1}
\end{align}
where note that $\Vert q_{i}(X_{k}) q_{i}(X_{k})^{\top} \Vert_{{\rm F}} = 1$. Next, we evaluate the second term of~\eqref{ineq:DQH}. For simplicity, let us define $V_{ij} \coloneqq [C(X)]_{ij} q_{i}(X_{k}) q_{i}(X_{k})^{\top} H ( q_{j}(X_{k}) q_{j}(X_{k})^{\top} - p_{j}(X) p_{j}(X)^{\top} )$. For any $\alpha, \, \beta \in \{ 1, \ldots, \ell \}$, the definition of $C(X)$ implies that if $i \in \I_{\alpha}$ and $j \in \I_{\beta}$, the value of $[C(X)]_{ij}$ is constant although it depends on $\alpha$ and $\beta$. Hence, we have
\begin{align}
\left\Vert \sum_{i \in \I_{\alpha}} \sum_{j \in \I_{\beta}} V_{ij} \right\Vert_{{\rm F}} \leq \Vert C(X) \Vert_{{\rm F}} \Vert H \Vert_{{\rm F}} \left\Vert \sum_{i \in \I_{\alpha}} q_{i}(X_{k}) q_{i}(X_{k})^{\top} \right\Vert_{{\rm F}} \left\Vert \sum_{j \in \I_{\beta}} \Big( q_{j}(X_{k}) q_{j}(X_{k})^{\top} - p_{j}(X) p_{j}(X)^{\top} \Big) \right\Vert_{{\rm F}}. \label{ineq:constant_Vij}
\end{align}
Meanwhile, note that $\sum_{j \in \I_{s}} q_{j}(X_{k}) q_{j}(X_{k})^{\top} = \Gamma_{s}(X_{k})$ and $\sum_{j \in \I_{s}} p_{j}(X) p_{j}(X)^{\top} = \Gamma_{s}(X)$ for each $s \in \{ 1, \ldots, \ell \}$. It then follows from~\eqref{ineq:constant_Vij} that
\begin{align}
& \left\Vert \sum_{i=1}^{d} \sum_{j=1}^{d} [C(X)]_{ij} q_{i}(X_{k}) q_{i}(X_{k})^{\top} H \Big( q_{j}(X_{k}) q_{j}(X_{k})^{\top} - p_{j}(X) p_{j}(X)^{\top} \Big) \right\Vert_{{\rm F}} \nonumber
\\
& \leq \left\Vert \sum_{i \in \I_{1}} \sum_{j \in \I_{1}} V_{ij} \right\Vert_{{\rm F}} + \cdots + \left\Vert \sum_{i \in \I_{1}} \sum_{j \in \I_{\ell}} V_{ij} \right\Vert_{{\rm F}}
 \nonumber
\\
& \qquad + \left\Vert \sum_{i \in \I_{2}} \sum_{j \in \I_{1}} V_{ij} \right\Vert_{{\rm F}} + \cdots + \left\Vert \sum_{i \in \I_{2}} \sum_{j \in \I_{\ell}} V_{ij} \right\Vert_{{\rm F}} + \cdots + \left\Vert \sum_{i \in \I_{\ell}} \sum_{j \in \I_{1}} V_{ij} \right\Vert_{{\rm F}} + \cdots + \left\Vert \sum_{i \in \I_{\ell}} \sum_{j \in \I_{\ell}} V_{ij} \right\Vert_{{\rm F}} \nonumber
\\
& \leq \Vert C(X) \Vert_{{\rm F}} \Vert H \Vert_{{\rm F}} | \I_{1} | \Big( \left\Vert \Gamma_{1}(X_{k}) - \Gamma_{1}(X) \right\Vert_{{\rm F}} + \cdots +  \left\Vert \Gamma_{\ell}(X_{k}) - \Gamma_{\ell}(X) \right\Vert_{{\rm F}} \Big) \nonumber
\\
& \qquad + \Vert C(X) \Vert_{{\rm F}} \Vert H \Vert_{{\rm F}} | \I_{2} | \Big( \left\Vert \Gamma_{1}(X_{k}) - \Gamma_{1}(X) \right\Vert_{{\rm F}} + \cdots +  \left\Vert \Gamma_{\ell}(X_{k}) - \Gamma_{\ell}(X) \right\Vert_{{\rm F}} \Big) + \cdots \nonumber
\\
& \qquad \qquad + \Vert C(X) \Vert_{{\rm F}} \Vert H \Vert_{{\rm F}} | \I_{\ell} | \Big( \left\Vert \Gamma_{1}(X_{k}) - \Gamma_{1}(X) \right\Vert_{{\rm F}} + \cdots +  \left\Vert \Gamma_{\ell}(X_{k}) - \Gamma_{\ell}(X) \right\Vert_{{\rm F}} \Big). \label{ineq:term2}
\end{align}
Using a way similar to the above evaluation, we also obtain
\begin{align}
& \left\Vert \sum_{i=1}^{d} \sum_{j=1}^{d} [C(X)]_{ij} \Big( q_{i}(X_{k}) q_{i}(X_{k})^{\top} - p_{i}(X) p_{i}(X)^{\top} \Big) H p_{j}(X) p_{j}(X)^{\top} \right\Vert_{{\rm F}} \nonumber
\\
& \leq \Vert C(X) \Vert_{{\rm F}} \Vert H \Vert_{{\rm F}} | \I_{1} | \Big( \left\Vert \Gamma_{1}(X_{k}) - \Gamma_{1}(X) \right\Vert_{{\rm F}} + \cdots +  \left\Vert \Gamma_{\ell}(X_{k}) - \Gamma_{\ell}(X) \right\Vert_{{\rm F}} \Big) \nonumber
\\
& \qquad + \Vert C(X) \Vert_{{\rm F}} \Vert H \Vert_{{\rm F}} | \I_{2} | \Big( \left\Vert \Gamma_{1}(X_{k}) - \Gamma_{1}(X) \right\Vert_{{\rm F}} + \cdots +  \left\Vert \Gamma_{\ell}(X_{k}) - \Gamma_{\ell}(X) \right\Vert_{{\rm F}} \Big) + \cdots \nonumber
\\
& \qquad \qquad + \Vert C(X) \Vert_{{\rm F}} \Vert H \Vert_{{\rm F}} | \I_{\ell} | \Big( \left\Vert \Gamma_{1}(X_{k}) - \Gamma_{1}(X) \right\Vert_{{\rm F}} + \cdots +  \left\Vert \Gamma_{\ell}(X_{k}) - \Gamma_{\ell}(X) \right\Vert_{{\rm F}} \Big), \label{ineq:term3}
\end{align}
where note that $\Vert p_{j}(X) p_{j}(X)^{\top} \Vert_{{\rm F}} = 1$. We have from~\eqref{ineq:DQH}--\eqref{ineq:term3} that~\eqref{lim:DQXkH} is satisfied.
\par
Recall that $\Vert \D \mathcal{Q} (X_{k}) - \D \mathcal{Q} (X) \Vert = \sup \{ \Vert \D \mathcal{Q} (X_{k}) U - \D \mathcal{Q} (X) U \Vert_{{\rm F}}; \Vert U \Vert_{{\rm F}} \leq 1 \}$. From the definition of the operator norm, there exists a sequence $\{ U_{i} \} \subset \S^{d}$ such that
\begin{align}
\Vert U_{i} \Vert_{{\rm F}} \leq 1, \quad \Vert \D \mathcal{Q} (X_{k}) - \D \mathcal{Q} (X) \Vert < \frac{1}{i} + \Vert \D \mathcal{Q} (X_{k}) U_{i} - \D \mathcal{Q} (X) U_{i} \Vert_{{\rm F}} \quad  \forall i \in \mathbb{N}. \label{ineq:opernorm}
\end{align}
The first inequality of~\eqref{ineq:opernorm} implies that $\{ U_{i} \} \subset \S^{d}$ has an accumulation point $U \in \S^{d}$, that is, there exists a subsequence $\{ U_{i_{j}} \} \subset \{ U_{i} \}$ such that $U_{i_{j}} \to U$ as $j \to \infty$.
Since the second inequality of~\eqref{ineq:opernorm} guarantees $\Vert \D \mathcal{Q} (X_{k}) - \D \mathcal{Q} (X) \Vert < \frac{1}{i_{j}} + \Vert \D \mathcal{Q} (X_{k}) U_{i_{j}} - \D \mathcal{Q} (X) U_{i_{j}} \Vert_{{\rm F}}$ for all $j \in \mathbb{N}$, taking $j \to \infty$ derives
\begin{align*}
\Vert \D \mathcal{Q} (X_{k}) - \D \mathcal{Q} (X) \Vert \leq \Vert \D \mathcal{Q} (X_{k}) U - \D \mathcal{Q} (X) U \Vert_{{\rm F}}.
\end{align*}
It then follows from equality~\eqref{lim:DQXkH} with $H=U$ that $\D \mathcal{Q}(X_{k}) \to \D \mathcal{Q}(X)$ as $k \to \infty$. Therefore, the derivative $\D \mathcal{Q}$ is continuous on $\S^{d}$.
\end{proof}

\noindent
Exploiting Lemmas~\ref{lem:derivative_thirdterm},~\ref{lem:derivative_Q}, and~\ref{leem:continuity_DQ}, we can provide the gradient and the Hessian of the proposed function. To represent the Hessian, for any $x \in \R^{n}$, we use the notation below:
\begin{align*}
G_{ij}(x) \coloneqq \frac{\partial}{\partial x_{i}} \left( \frac{\partial}{\partial x_{j}} G(x) \right) \quad (\, i = 1, \ldots, n, ~ j = 1, \ldots, n \,).
\end{align*}

\begin{theorem} \label{thm:F_derivative_Hessian}
For $v \in \R^{m}$, $M \in \S^{d}$, $\rho > 0$, $\sigma > 0$, and $\tau > 0$, the function $F(\, \cdot \, ; v, M, \rho, \sigma, \tau) \colon \R^{n} \to \R$ is twice continuously differentiable on $\R^{n}$, and its gradient and Hessian are given as follows:
\begin{align*}
\nabla F(x; v, M, \rho, \sigma, \tau) & \textstyle = \rho \nabla f(x) - \sigma \tau \nabla g(x) ( \frac{1}{\tau} y - g(x) ) - \sigma \tau \D G(x)^{\ast} [ \frac{1}{\tau} M - G(x) ]_{+}^{3},
\\
\nabla^{2} F(x; v, M, \rho, \sigma, \tau) & = \rho \nabla^{2} f(x) - \sigma \tau \sum_{j=1}^{m} ( \textstyle \frac{1}{\tau} v_{j} - g_{j}(x) ) \nabla^{2} g_{j}(x) + \sigma \tau \nabla g(x) \nabla g(x)^{\top}
\\
& \qquad - \sigma \tau
\begin{bmatrix}
\langle G_{11}(x), [ \frac{1}{\tau} M - G(x)]_{+}^{3} \rangle & \cdots & \langle G_{1n}(x), [ \frac{1}{\tau} M - G(x)]_{+}^{3} \rangle
\\
\vdots & \ddots & \vdots
\\
\langle G_{n1}(x), [ \frac{1}{\tau} M - G(x)]_{+}^{3} \rangle & \cdots & \langle G_{nn}(x), [ \frac{1}{\tau} M - G(x)]_{+}^{3} \rangle
\end{bmatrix}
\\
& \qquad + \sigma \tau
\begin{bmatrix}
\langle G_{1}(x), \D \mathcal{Q}(\frac{1}{\tau} M - G(x))G_{1}(x) \rangle & \cdots & \langle G_{1}(x), \D \mathcal{Q}( \frac{1}{\tau} M - G(x))G_{n}(x) \rangle
\\
\vdots & \ddots & \vdots
\\
\langle G_{n}(x), \D \mathcal{Q}( \frac{1}{\tau} M - G(x))G_{1}(x) \rangle & \cdots & \langle G_{n}(x), \D \mathcal{Q}( \frac{1}{\tau} M - G(x))G_{n}(x) \rangle
\end{bmatrix}
\end{align*}
for any $x \in \R^{n}$, where $\mathcal{Q}$ is defined by~\eqref{def:funcQ}.
\end{theorem}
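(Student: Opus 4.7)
The plan is to decompose $F$ into its three summands, handle the first two by elementary calculus, and reduce the third term to a chain-rule computation on the composition $\mathcal{Q}\circ(\frac{1}{\tau}M - G(\cdot))$ using the three lemmas that have already done the technical work.

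First I would treat the smooth summands. Since $f \in C^2$, the term $\rho f(x)$ contributes $\rho\nabla f(x)$ to the gradient and $\rho\nabla^{2} f(x)$ to the Hessian. For the quadratic in $g$, write $\frac{\sigma\tau}{2}\Vert\frac{1}{\tau}v - g(x)\Vert^{2} = \frac{\sigma\tau}{2}\sum_{j=1}^{m}(\frac{1}{\tau}v_{j} - g_{j}(x))^{2}$ and apply the ordinary chain rule componentwise; this produces the claimed gradient $-\sigma\tau\nabla g(x)(\frac{1}{\tau}v - g(x))$ and the claimed Hessian $-\sigma\tau\sum_{j}(\frac{1}{\tau}v_{j} - g_{j}(x))\nabla^{2} g_{j}(x) + \sigma\tau\nabla g(x)\nabla g(x)^{\top}$, both continuous in $x$ because $g \in C^{2}$.

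Next I would attack the third summand $\varphi(x) \coloneqq \frac{\sigma\tau}{4}\,{\rm tr}([\frac{1}{\tau}M - G(x)]_{+}^{4})$. Lemma~\ref{lem:derivative_thirdterm} (with the scalar $\sigma\tau/4$ absorbed) immediately gives $\nabla\varphi(x) = -\sigma\tau\,\D G(x)^{\ast}[\frac{1}{\tau}M - G(x)]_{+}^{3} = -\sigma\tau\,\D G(x)^{\ast}\mathcal{Q}(\frac{1}{\tau}M - G(x))$, matching the third piece of the gradient. In coordinates $[\nabla\varphi(x)]_{i} = -\sigma\tau\langle G_{i}(x), \mathcal{Q}(\frac{1}{\tau}M - G(x))\rangle$. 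To differentiate this a second time I would apply the product rule on $G_{i}(x)$ and $\mathcal{Q}(\frac{1}{\tau}M - G(x))$. Since $G \in C^{2}$, the map $x \mapsto G_{i}(x)$ is $C^{1}$ with $\partial_{x_{j}}G_{i}(x) = G_{ij}(x)$; Lemma~\ref{lem:derivative_Q} gives differentiability of $\mathcal{Q}$, so the chain rule yields
\begin{align*}
\frac{\partial}{\partial x_{j}}\mathcal{Q}\!\left(\tfrac{1}{\tau}M - G(x)\right) = -\,\D\mathcal{Q}\!\left(\tfrac{1}{\tau}M - G(x)\right)G_{j}(x).
\end{align*}
Combining these pieces produces exactly the $(i,j)$-entry $-\sigma\tau\langle G_{ij}(x), [\frac{1}{\tau}M - G(x)]_{+}^{3}\rangle + \sigma\tau\langle G_{i}(x), \D\mathcal{Q}(\frac{1}{\tau}M - G(x))G_{j}(x)\rangle$ of $\nabla^{2}\varphi(x)$, which is precisely the sum of the last two matrices in the statement.

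Finally, I would verify continuity of the full Hessian. The contributions from $\rho f$ and the $g$-quadratic are continuous because $f,g \in C^{2}$. Continuity of the penultimate matrix follows from continuity of $G_{ij}$ and continuity of $X \mapsto [X]_{+}^{3}$ on $\S^{d}$ (a consequence of continuity of the eigenvalue decomposition). The key step, and the only place where serious analysis intervenes, is continuity of the final matrix: this uses continuity of the $G_{i}$ together with Lemma~\ref{leem:continuity_DQ}, which guarantees that $x \mapsto \D\mathcal{Q}(\frac{1}{\tau}M - G(x))$ is continuous. The main obstacle is therefore not in this theorem itself but in Lemma~\ref{leem:continuity_DQ}; once that is in hand the present statement reduces to careful bookkeeping. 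As a consistency check I would verify symmetry of the computed Hessian: $G_{ij}(x) = G_{ji}(x)$ handles the third matrix, and the explicit formula of Lemma~\ref{lem:derivative_Q} shows that $\D\mathcal{Q}(W)$ is self-adjoint on $\S^{d}$, so $\langle G_{i}(x), \D\mathcal{Q}(W)G_{j}(x)\rangle = \langle G_{j}(x), \D\mathcal{Q}(W)G_{i}(x)\rangle$, giving symmetry of the last matrix.
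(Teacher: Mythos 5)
Your proposal is correct and follows essentially the same route as the paper's own (much terser) proof: the paper likewise obtains the gradient from Lemma~\ref{lem:derivative_thirdterm} and the chain rule, the Hessian from Lemma~\ref{lem:derivative_Q} and the chain rule, and continuity from the form of the Hessian together with Lemma~\ref{leem:continuity_DQ}. Your version simply spells out the componentwise product-rule computation and the symmetry check that the paper leaves implicit.
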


\begin{proof}
By combining Lemma~\ref{lem:derivative_thirdterm} and the chain rule of differentiation, we obtain the gradient $\nabla F(x; v, M, \rho, \sigma, \tau)$. The Hessian $\nabla^{2} F(x; v, M, \rho, \sigma, \tau)$ is derived from Lemma~\ref{lem:derivative_Q} and the chain rule of differentiation, and its continuity is ensured by the form of the Hessian and Lemma~\ref{leem:continuity_DQ}. Therefore, we conclude that $F(\, \cdot \, ; v, M, \rho, \sigma, \tau) \colon \R^{n} \to \R$ is twice continuously differentiable on $\R^{n}$.
\end{proof}

\section{A penalty method for second-order stationary points} \label{sec:methods}
This section presents a penalty method to find second-order stationary points for problem~\eqref{NSDP} by exploiting the penalty function proposed in the previous section. 
To begin with, we define $\F(\, \cdot \,; \gamma) \colon \R^{n} \to \R$ and $\P \colon \R^{n} \to \R$ as
\begin{align*}
& \F(x; \gamma) \coloneqq F(x; 0, O, 1, \gamma, 1) = f(x) + \frac{\gamma}{2} \Vert g(x) \Vert^{2} + \frac{\gamma}{4} {\rm tr}([-G(x)]_{+}^{4}),
\\
& \P(x) \coloneqq F(x; 0, O, 0, 1, 1) = \frac{1}{2} \Vert g(x) \Vert^{2} + \frac{1}{4} {\rm tr} ([-G(x)]_{+}^{4}),
\end{align*}
respectively, where $\gamma > 0$ is a parameter. For the subsequent arguments, the generalized Polyak-\L ojasiewicz inequality is provided as follows.
\begin{definition}
We say that $\bar{x} \in \R^{n}$ satisfies the generalized Polyak-\L ojasiewicz inequality if there exist $\nu > 0$ and $\Psi \colon B(\bar{x}, \nu) \to \R$ such that $\Psi(x) \to 0~(x \to \bar{x})$ and
\begin{align*}
| \P(x) - \P(\bar{x}) | \leq \Psi(x) \Vert \nabla \P(x) \Vert \quad \forall x \in B(\bar{x}, \nu).
\end{align*}
\end{definition}
\noindent
The generalized Polyak-\L ojasiewicz inequality is often assumed in the convergence analysis of optimization methods, as found in~\cite{AnFuHaSaSe21,AnFuHaSaSe24,AnHaVi20,LiYaFu25}. It is generally known that it is satisfied when $G$ is analytic.
\par
From now on, we prepare several lemmas regarding the first- and second-order sequential optimality. These lemmas play a crucial role in the subsequent convergence analysis of the proposed method.
\begin{lemma} \label{lem:x_AKKT}
Let $\{ x_{k} \} \subset \R^{n}$, $\{ \gamma_{k} \} \subset (0,\infty)$, and $\{ \delta_{k} \} \subset (0,1)$ be given sequences, and let $y_{k} \coloneqq - \gamma_{k-1} g(x_{k})$ and $Z_{k} \coloneqq \gamma_{k-1} [-G(x_{k})]_{+}^{3}$ for any $k \in \mathbb{N}$. Suppose that the sequence $\{ x_{k} \}$ converges to a feasible point $\bar{x} \in \mathcal {S}$. Suppose also that $\Vert \F (x_{k+1}; \gamma_{k}) \Vert \leq \delta_{k}$ for all $k \in \mathbb{N} \cup \{ 0 \}$. Then, the point $\bar{x}$ satisfies the AKKT conditions, and $\{ (x_{k}, y_{k}, Z_{k}) \}$ is an AKKT sequence corresponding to $\bar{x}$.
\end{lemma}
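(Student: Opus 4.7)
The plan is to read off the gradient of $\F(\,\cdot\,;\gamma_k)$ from Theorem~\ref{thm:F_derivative_Hessian}, identify it with the Lagrangian gradient at the prescribed multipliers $(y_k,Z_k)$, and then verify the three clauses of Definition~\ref{AKKT_def} in turn.

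First, I will specialize Theorem~\ref{thm:F_derivative_Hessian} to $v=0$, $M=O$, $\rho=1$, $\sigma=\gamma_k$, and $\tau=1$, which gives
\begin{align*}
\nabla \F(x;\gamma_k) = \nabla f(x) + \gamma_k \nabla g(x) g(x) - \gamma_k \D G(x)^{\ast}[-G(x)]_{+}^{3}.
\end{align*}
Evaluating this at $x=x_{k+1}$ and recognizing the definitions $y_{k+1}= -\gamma_k g(x_{k+1})$ and $Z_{k+1}=\gamma_k [-G(x_{k+1})]_{+}^{3}$, the right-hand side is precisely $\nabla_{x} L(x_{k+1},y_{k+1},Z_{k+1})$. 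Hence the hypothesis $\Vert \nabla \F(x_{k+1};\gamma_k) \Vert \leq \delta_k$, under the (standing) assumption $\delta_k \to 0$, yields $\nabla_{x} L(x_j,y_j,Z_j) \to 0$ as $j \to \infty$ after the trivial re-indexing $j=k+1$. Combined with $x_k \to \bar{x}$, this delivers the first two clauses of Definition~\ref{AKKT_def}. Membership $Z_k \in \S_{+}^{d}$ is also immediate since $[-G(x_k)]_{+}^{3}$ is the cube of a symmetric positive semidefinite matrix and $\gamma_{k-1}>0$.

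The main obstacle is the spectral compatibility clause: one must exhibit orthogonal $P_k \to P$, with $P$ diagonalizing $G(\bar{x})$ and $P_k$ diagonalizing $Z_k$, such that $\lambda_j^{P}(G(\bar{x}))>0$ implies $\lambda_j^{P_k}(Z_k)=0$ for all sufficiently large $k$. The key structural observation is that $Z_k$ is a matrix-valued function of $G(x_k)$, so any orthogonal $P_k$ that diagonalizes $G(x_k)$ automatically diagonalizes $Z_k$ too, with the eigenvalue relation $\lambda_j^{P_k}(Z_k) = \gamma_{k-1}[-\lambda_j^{P_k}(G(x_k))]_{+}^{3}$. Invoking the continuity of $G$ together with standard perturbation results for symmetric matrices, I will select the $P_k$ so that $P_k \to P$ for some orthogonal $P$ diagonalizing $G(\bar{x})$ with matching eigenvalue ordering; in particular $\lambda_j^{P_k}(G(x_k)) \to \lambda_j^{P}(G(\bar{x}))$. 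If $\lambda_j^{P}(G(\bar{x}))>0$, continuity then forces $\lambda_j^{P_k}(G(x_k))>0$ for all large $k$, which collapses $[-\lambda_j^{P_k}(G(x_k))]_{+}$ to zero and hence $\lambda_j^{P_k}(Z_k)=0$. This completes the verification that $\bar{x}$ is an AKKT point with $\{(x_k,y_k,Z_k)\}$ as a corresponding AKKT sequence.
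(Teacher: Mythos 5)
Your proposal is correct and follows essentially the same route as the paper's proof: identify $\nabla\F(x_{k};\gamma_{k-1})$ with $\nabla_{x}L(x_{k},y_{k},Z_{k})$ via Theorem~\ref{thm:F_derivative_Hessian}, and then use the fact that any orthogonal $P_{k}$ diagonalizing $G(x_{k})$ also diagonalizes $Z_{k}$ with $\lambda_{j}^{P_{k}}(Z_{k})=\gamma_{k-1}[-\lambda_{j}^{P_{k}}(G(x_{k}))]_{+}^{3}$, so that eigenvalue continuity kills the entries corresponding to positive eigenvalues of $G(\bar{x})$. You even flag the same implicit hypothesis ($\delta_{k}\to 0$) that the paper's proof silently uses, so there is nothing substantive to add.
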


\begin{proof}
Let $k \in \mathbb{N}$ be arbitrary. Theorem~\ref{thm:F_derivative_Hessian} derives $\nabla \F(x_{k}; \gamma_{k-1}) = \nabla f(x_{k}) - \nabla g(x_{k}) y_{k} - \D G(x_{k})^{\ast} Z_{k} = \nabla_{x} L(x_{k}, y_{k}, Z_{k})$. It then follows from $\Vert \F (x_{k}; \gamma_{k-1}) \Vert \leq \delta_{k-1}$ that $\nabla L(x_{k}, y_{k}, Z_{k}) \to 0$ as $k \to \infty$.
\par
Next, we denote by $P_{k} \in \R^{d \times d}$ an orthogonal matrix that diagonalizes $G(x_{k}) \in \S^{d}$, that is, $G(x_{k}) = P_{k} {\rm diag}( \mu_{1}^{(k)}, \ldots, \mu_{d}^{(k)} ) P_{k}^{\top}$, where $\mu_{j}^{(k)} \coloneqq \lambda_{j}^{P_{k}}(G(x_{k}))$ for any $j \in \{ 1, \ldots, d \}$. It is clear that $\{ P_{k} \}$ is bounded. Moreover, since $\{ x_{k} \}$ is bounded, so is $\{ \mu_{j}^{(k)} \}$ for each $j \in \{ 1, \ldots, d \}$. Without loss of generality, we can assume that $P_{k} \to P$, $\mu_{1}^{(k)} \to \mu_{1}$, \ldots, $\mu_{d}^{(k)} \to \mu_{d}$ as $k \to \infty$. Note that $P \in \R^{d \times d}$ is an orthogonal matrix satisfying $G(\bar{x}) = P {\rm diag}( \mu_{1}, \ldots, \mu_{d} ) P^{\top}$. Consequently, we have $\lambda^{P}(G(\bar{x})) = [\mu_{1} \, \cdots \, \mu_{d}]^{\top}$. Let $j \in \{ 1, \ldots, d \}$, and suppose that $\lambda_{j}^{P}(G(\bar{x})) > 0$. From $\mu_{j}^{(k)} \to \mu_{j} ~ (k \to \infty)$, there exists $k_{j} \in \mathbb{N}$ such that $|\mu_{j}^{(k)} - \mu_{j}| \leq \frac{1}{2} \mu_{j}$ for all $k \geq k_{j}$. Now, we suppose $k \geq k_{j}$. Then, we obtain $0 < \frac{1}{2} \lambda_{j}^{P}(G(\bar{x})) = \frac{1}{2} \mu_{j} = \mu_{j} - \frac{1}{2} \mu_{j} \leq \mu_{j}^{(k)}$. Hence, the definition of $Z_{k}$ derives $\lambda_{j}^{P_{k}}(Z_{k}) = \gamma_{k-1} [-\lambda_{j}^{P_{k}}(G(x_{k}))]_{+}^{3} = \gamma_{k-1} [-\mu_{j}^{(k)}]_{+}^{3} = 0$. This completes the proof.
\end{proof}

\begin{lemma} \label{lem:x_AKKT2}
Let $\{ x_{k} \} \subset \R^{n}$, $\{ \gamma_{k} \} \subset (0,\infty)$, and $\{ \delta_{k} \} \subset (0,1)$ be given sequences, and let $y_{k} \coloneqq - \gamma_{k-1} g(x_{k})$, $Z_{k} \coloneqq \gamma_{k-1} [-G(x_{k})]_{+}^{3}$, and $\varepsilon_{k} \coloneqq \delta_{k-1}$ for any $k \in \mathbb{N}$. Suppose that the sequence $\{ x_{k} \}$ converges to a feasible point $\bar{x} \in \mathcal {S}$. Suppose also that $\nabla^{2} \F(x_{k+1}; \gamma_{k}) + \delta_{k} I \succeq O$ for all $k \in \mathbb{N}$. Then, there exists a positive integer $\bar{n} \in \mathbb{N}$ such that
\begin{align*}
\forall k \geq \bar{n}, \quad \forall h \in S(x_{k}, \bar{x}), \quad 
h^{\top} \left( \nabla_{xx}^{2}L(x_{k}, y_{k}, Z_{k}) + \sigma(x_{k}, Z_{k}) \right) h \geq - \varepsilon_{k} \Vert h \Vert^{2}.
\end{align*}
\end{lemma}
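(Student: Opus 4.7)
The plan is to combine Theorem~\ref{thm:F_derivative_Hessian} with the structure of the critical subspace $S(x_{k+1},\bar x)$ to translate the semidefiniteness assumption on $\nabla^2\F(x_{k+1};\gamma_k)$ into the desired inequality on $\nabla^2_{xx}L+\sigma$.

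Applying Theorem~\ref{thm:F_derivative_Hessian} to $\F(\cdot;\gamma_k)=F(\cdot;0,O,1,\gamma_k,1)$ and identifying $y_{k+1}\coloneqq-\gamma_k g(x_{k+1})$ and $Z_{k+1}\coloneqq\gamma_k[-G(x_{k+1})]_+^3$, I obtain
\begin{align*}
\nabla^2\F(x_{k+1};\gamma_k)=\nabla^2_{xx}L(x_{k+1},y_{k+1},Z_{k+1})+\gamma_k\nabla g(x_{k+1})\nabla g(x_{k+1})^\top+M_2^{(k)},
\end{align*}
where $M_2^{(k)}\coloneqq\gamma_k[\langle G_i(x_{k+1}),\D\mathcal{Q}(-G(x_{k+1}))G_j(x_{k+1})\rangle]_{ij}$. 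Since $\nabla g(x_{k+1})^\top h=0$ whenever $h\in S(x_{k+1},\bar x)$, the hypothesis $\nabla^2\F(x_{k+1};\gamma_k)+\delta_k I\succeq O$ at once yields
\begin{align*}
h^\top\bigl(\nabla^2_{xx}L(x_{k+1},y_{k+1},Z_{k+1})+M_2^{(k)}\bigr)h\geq -\delta_k\|h\|^2\quad\forall h\in S(x_{k+1},\bar x).
\end{align*}
Because $\varepsilon_{k+1}=\delta_k$, it remains only to prove $h^\top\sigma(x_{k+1},Z_{k+1})h\geq h^\top M_2^{(k)}h$ on $S(x_{k+1},\bar x)$ for all sufficiently large $k$.

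To this end, I diagonalize $G(x_{k+1})=P_k\Lambda_kP_k^\top$ with $\Lambda_k=\mathrm{diag}(\lambda_1,\ldots,\lambda_d)$ in descending order, put $K\coloneqq P_k^\top(\D G(x_{k+1})h)P_k$, and split $\{1,\ldots,d\}=\A_{k+1}\cup\B_{k+1}\cup\C_{k+1}$ by the sign of $\lambda_j$. A cyclic-trace computation using that $Z_{k+1}$ and $G(x_{k+1})^\dagger$ are both diagonal in this basis gives
\begin{align*}
h^\top\sigma(x_{k+1},Z_{k+1})h=\gamma_k\sum_{i,j}\bigl([-\lambda_i]_+^3\lambda_j^\dagger+[-\lambda_j]_+^3\lambda_i^\dagger\bigr)K_{ij}^2,
\end{align*}
while Lemma~\ref{lem:derivative_Q} applied at $X=-G(x_{k+1})$ produces
\begin{align*}
h^\top M_2^{(k)}h=\gamma_k\sum_{i,j\in\A_{k+1}\cup\C_{k+1}}A_{ij}K_{ij}^2+2\gamma_k\sum_{\substack{i\in\A_{k+1}\cup\C_{k+1}\\ j\in\B_{k+1}}}B_{ii}K_{ij}^2,
\end{align*}
where $A$ and $B$ are the divided-difference matrices of Lemma~\ref{lem:derivative_Q} read off $\{-\lambda_1,\ldots,-\lambda_d\}$. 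Writing $h^\top(\sigma-M_2^{(k)})h=\gamma_k\sum_{i,j}C_{ij}K_{ij}^2$, a direct check over the six sign patterns of $(\lambda_i,\lambda_j)$ shows: $C_{ij}=0$ on $\A_{k+1}\times(\A_{k+1}\cup\B_{k+1})$ and its symmetric copy; $C_{ij}=\lambda_j^4/[\lambda_i(\lambda_i-\lambda_j)]>0$ on $\A_{k+1}\times\C_{k+1}$ and symmetrically; and $C_{ij}$ can be strictly negative only when both indices lie in $\B_{k+1}\cup\C_{k+1}$.

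To absorb those remaining bad pairs, I exploit the critical-subspace constraint. Since $\bar x\in\mathcal S$ forces $G(\bar x)\succeq O$, the eigenvalue continuity argument from the proof of Lemma~\ref{leem:continuity_DQ} supplies a $\bar n$ such that $\B_{k+1}\cup\C_{k+1}\subseteq\B\coloneqq\{j:\lambda_j(G(\bar x))=0\}$ for every $k+1\geq\bar n$; consequently every pair with $C_{ij}<0$ is contained in $\B\times\B$. The matrix $\U_\B(G(x_{k+1}))$ of Lemma~\ref{lem:BoSh00} and the last $|\B|$ columns of $P_k$ form two orthonormal bases of the same $|\B|$-dimensional eigenspace of $G(x_{k+1})$, so the defining condition $\U_\B(G(x_{k+1}))^\top(\D G(x_{k+1})h)\U_\B(G(x_{k+1}))=O$ of $S(x_{k+1},\bar x)$ is equivalent to $K_{ij}=0$ for all $(i,j)\in\B\times\B$. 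Hence every negative contribution vanishes, $h^\top(\sigma(x_{k+1},Z_{k+1})-M_2^{(k)})h\geq0$, and the lemma follows. The main technical hurdle is the case-by-case divided-difference sign check producing the $C_{ij}$, together with the index bookkeeping that confines each negative coefficient inside $\B\times\B$ where the critical-subspace constraint annihilates it.
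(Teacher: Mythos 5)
Your proposal is correct and follows essentially the same route as the paper: decompose $\nabla^{2}\F$ via Theorem~\ref{thm:F_derivative_Hessian}, reduce to the inequality $h^{\top}\sigma(x_{k},Z_{k})h \geq \gamma_{k-1}\langle \D G(x_{k})h, \D\mathcal{Q}(-G(x_{k}))\D G(x_{k})h\rangle$ on $S(x_{k},\bar{x})$, diagonalize, and compare divided-difference coefficients entrywise, arriving at the same nonnegative quantity $\lambda_{j}^{4}/[\lambda_{i}(\lambda_{i}-\lambda_{j})]$ on the surviving index pairs. The only difference is organizational: the paper uses the critical-subspace condition $V_{k}^{\top}H_{k}V_{k}=O$ up front to zero out the entire $\B\times\B$ block before computing, whereas you compute all coefficients first and then observe that every negative one is confined to that block.
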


\begin{proof}
Let $\B \subset \mathbb{N}$, $\N \subset \S^{d}$, and $\U_{\B} \colon \N \to \R^{d \times |\B|}$ be the sets and function defined in Lemma~\ref{lem:BoSh00}, respectively. Now, we define $\A \coloneqq \{ j \in \mathbb{N}; \lambda_{j}(G(\bar{x})) > 0 \}$ and $\ell \coloneqq |\A|$. Note that $\lambda_{1}(G(\bar{x})) \geq \cdots \geq \lambda_{d}(G(\bar{x})) \geq 0$ according to $\bar{x} \in \mathcal{S}$, and recall that $\B = \{ j \in \mathbb{N}; \lambda_{j}(G(\bar{x})) = 0 \}$. Then, we readily have $\A = \{ 1, \ldots, \ell \}$ and $\B = \{ \ell+1, \ldots, d \}$. Recall that $\N$ is a neighborhood of $G(\bar{x})$. The continuity of $G$ implies the existence of $r>0$ satisfying
\begin{align}
G(x) \in \N \quad  \forall x \in B(\bar{x}, r). \label{GinN}
\end{align}
Let $j \in \{ 1, \ldots, \ell \}$ be arbitrary. From the continuity of $G$ and eigenvalues, there exists $r_{j} > 0$ such that $|\lambda_{j}(G(x)) - \lambda_{j}(G(\bar{x}))| \leq \frac{1}{2}\lambda_{j}(G(\bar{x}))$ for all $x \in B(\bar{x}, r_{j})$. It then follows from $\lambda_{j}(G(\bar{x})) > 0$ and $\A = \{ 1,\ldots, \ell \}$ that
\begin{align}
0 < \frac{1}{2} \lambda_{j}(G(\bar{x})) = \lambda_{j}(G(\bar{x})) - \frac{1}{2} \lambda_{j}(G(\bar{x})) \leq \lambda_{j}(G(x)) \quad \forall j \in \A, \, \forall x \in B(\bar{x}, r_{j}). \label{eig_G_positive}
\end{align}
Let us define $\delta \coloneqq \min \{ r, r_{1}, \ldots, r_{\ell} \}$. Since $\{ x_{k} \}$ converges to $\bar{x}$, there exists $\bar{n} \in \mathbb{N}$ such that $x_{k} \in B(\bar{x}, \delta)$ for all $k \geq \bar{n}$. Now, we suppose that $k \geq \bar{n}$. It then follows from~\eqref{GinN}, \eqref{eig_G_positive}, and $x_{k} \in B(\bar{x}, \delta)$ that
\begin{align}
G(x_{k}) \in \N, \quad \lambda_{j}(G(x_{k})) > 0 ~~ \forall j \in \A. \label{GxkinN_positive}
\end{align}
Let $\B_{k}^{+}$, $\B_{k}^{0}$, and $\B_{k}^{-}$ be subsets of $\B$ defined by
\begin{align*}
\B_{k}^{+} \coloneqq \{ j \in \B; \lambda_{j}(G(x_{k})) > 0 \}, \quad \B_{k}^{0} \coloneqq \{ j \in \B; \lambda_{j}(G(x_{k})) = 0 \}, \quad \B_{k}^{-} \coloneqq \{ j \in \B; \lambda_{j}(G(x_{k})) < 0 \},
\end{align*}
respectively. We consider the following matrix decomposition:
\begin{align} \label{decom:Gxk}
\begin{gathered}
G(x_{k}) = P_{k} D_{k} P_{k}^{\top}, ~~ P_{k} = 
\begin{bmatrix}
U_{k} & V_{k}^{+} & V_{k}^{0} & V_{k}^{-}
\end{bmatrix}
, 
\\
D_{k} = {\rm diag}( \lambda_{1}(G(x_{k})), \ldots, \lambda_{d}(G(x_{k})) ) =
\begin{bmatrix}
\Lambda_{k} & O & O & O
\\
O & \Sigma_{k}^{+} & O & O
\\
O & O & O & O
\\
O & O & O & \Sigma_{k}^{-}
\end{bmatrix}
, 
\\
U_{k} \in \R^{d \times \ell}, ~~ V_{k}^{+} \in \R^{d \times |\B_{k}^{+}|}, ~~ V_{k}^{0} \in \R^{d \times |\B_{k}^{0}|}, ~~ V_{k}^{-} \in \R^{d \times |\B_{k}^{-}|},
\\
\Lambda_{k} \in \S_{++}^{\ell}, ~~ \Sigma_{k}^{+} \in \S_{++}^{|\B_{k}^{+}|}, ~~ \Sigma_{k}^{-} \in \S_{--}^{|\B_{k}^{-}|}. 
\end{gathered}
\end{align}
Note that the columns of $[V_{k}^{+} ~ V_{k}^{0} ~ V_{k}^{-}] \in \R^{d \times (d-\ell)}$ belong to the eigenspace for the $d-\ell \, (=|\B|)$ smallest eigenvalues of $G(x_{k})$. Meanwhile, the columns of $\U_{\B}(G(x_{k}))$ form an orthonormal basis of the eigenspace for the $d-\ell$ smallest eigenvalues of $G(x_{k})$. Therefore, there exists an orthogonal matrix $M_{k} \in \R^{(d-\ell) \times (d-\ell)}$ such that 
\begin{align}
V_{k} = \U_{\B}(G(x_{k})) M_{k}. \label{eq:VMUB} 
\end{align}
Let $h \in S(x_{k}, \bar{x})$ be arbitrary. We define 
\begin{align} \label{def:HH}
H_{k} \coloneqq \mbox{D}G(x_{k}) h, \quad \widetilde{H}_{k} \coloneqq P_{k}^{\top} H_{k} P_{k}.
\end{align}
Note that 
\begin{align}
\nabla g(x_{k})^{\top} h = 0, \quad \U_{\B}(G(x_{k}))^{\top} H_{k} \U_{\B}(G(x_{k})) = O. \label{eq:gh_UBH}
\end{align}
It then follows from~\eqref{eq:VMUB} that $V_{k}^{\top} H_{k} V_{k} = M_{k}^{\top} \U_{\B}(G(x_{k}))^{\top} H_{k} \U_{\B}(G(x_{k})) M_{k} = O$, that is,
\begin{align}
\widetilde{H}_{k} = \left[
\begin{array}{cccc}
U_{k}^{\top} H_{k} U_{k} & U_{k}^{\top} H_{k} V_{k}^{+} & U_{k}^{\top} H_{k} V_{k}^{0} & U_{k}^{\top} H_{k} V_{k}^{-}
\\
(V_{k}^{+})^{\top} H_{k} U_{k} & O & O & O
\\
(V_{k}^{0})^{\top} H_{k} U_{k} & O & O & O
\\
(V_{k}^{-})^{\top} H_{k} U_{k} & O & O & O
\end{array}
\right]. \label{eq:tildeH}
\end{align}
From the definition of the sigma term, we obtain
\begin{align}
h^{\top} \sigma(x_{k}, Z_{k}) h 
= 2 \gamma_{k-1} \Big\langle \widetilde{H}_{k}, D_{k}^{\dagger} \widetilde{H}_{k} [-D_{k}]_{+}^{3} \Big\rangle = \gamma_{k-1} \Big\langle \widetilde{H}_{k}, D_{k}^{\dagger} \widetilde{H}_{k} [-D_{k}]_{+}^{3} + [-D_{k}]_{+}^{3} \widetilde{H}_{k} D_{k}^{\dagger} \Big\rangle. \label{eq:dHd}
\end{align}
Moreover, using~\eqref{decom:Gxk} and~\eqref{eq:tildeH} derives
\begin{align} 
D_{k}^{\dagger} \widetilde{H}_{k} [-D_{k}]_{+}^{3} + [-D_{k}]_{+}^{3} \widetilde{H}_{k} D_{k}^{\dagger}
&=
\begin{bmatrix}
O & O & O & -\Lambda_{k}^{-1} U_{k}^{\top}H_{k}V_{k}^{-} (\Sigma_{k}^{-})^{3}
\\
O & O & O & O
\\
O & O & O & O
\\
- (\Sigma_{k}^{-})^{3} (V_{k}^{-} )^{\top} H_{k} U_{k} \Lambda_{k}^{-1} & O & O & O
\end{bmatrix} \nonumber
\\
&=
\begin{bmatrix}
O & O & O & [E_{k}]_{\A \B_{k}^{-}} \circ U_{k}^{\top}H_{k}V_{k}^{-}
\\
O & O & O & O
\\
O & O & O & O
\\
[E_{k}]_{\A \B_{k}^{-}}^{\top} \circ (V_{k}^{-} )^{\top} H_{k} U_{k} & O & O & O
\end{bmatrix}
, \label{eq:DHD}
\end{align}
where the matrix $E_{k} \in \R^{d \times d}$ is defined by 
\begin{align} \label{def:matE}
[E_{k}]_{ij} \coloneqq \left\{
\begin{aligned}
& -\frac{ \lambda_{j}(G(x_{k}))^{3} }{ \lambda_{i}(G(x_{k})) } && \mbox{if}~i \in \A, \, j \in \B_{k}^{-},
\\
& 0 && \mbox{otherwise}.
\end{aligned}
\right.
\end{align}
On the other hand, noting~\eqref{def:HH} leads to
\begin{align}
\gamma_{k-1} \big\langle H_{k}, \D \mathcal{Q}(-G(x_{k}))H_{k} \big\rangle = \gamma_{k-1} \Big\langle \widetilde{H}_{k}, P_{k}^{\top} (\D \mathcal{Q}(-G(x_{k}))H_{k}) P_{k} \Big\rangle. \label{eq:rhoHDQH}
\end{align}
Lemma~\ref{lem:derivative_Q} and equation~\eqref{eq:tildeH} imply
\begin{align} 
P_{k}^{\top} \D \mathcal{Q}(-G(x_{k}))H_{k} P_{k} = 
\begin{bmatrix}
O & O & O & [F_{k}]_{\A \B_{k}^{-}} \circ U_{k}^{\top} H_{k} V_{k}^{-}
\\
O & O & O & O
\\
O & O & O & O
\\
[F_{k}]_{\A \B_{k}^{-}}^{\top} \circ (V_{k}^{-})^{\top} H_{k} U_{k} & O & O & O
\end{bmatrix}
, \label{derivative:QH}
\end{align}
where the matrix $F \in \R^{d \times d}$ is defined as
\begin{align} \label{def:matF}
[F_{k}]_{ij} &\coloneqq \left\{
\begin{aligned}
& \frac{\lambda_{j}(G(x_{k}))^{3}}{\lambda_{j}(G(x_{k})) - \lambda_{i}(G(x_{k}))} && \mbox{if}~i \in \A, \, j \in \B_{k}^{-},
\\
& 0 && \mbox{otherwise}.
\end{aligned}
\right.
\end{align}
By~\eqref{eq:DHD} and~\eqref{derivative:QH}, we have
\begin{align*}
& D_{k}^{\dagger} \widetilde{H}_{k} [-D_{k}]_{+}^{3} + [-D_{k}]_{+}^{3} \widetilde{H}_{k} D_{k}^{\dagger} - P_{k}^{\top} \D \mathcal{Q}(-G(x_{k}))H_{k} P_{k} 
\\
&= 
\begin{bmatrix}
O & O & O & [(E_{k} - F_{k}) \circ \widetilde{H}_{k}]_{\A \B_{k}^{-}}
\\
O & O & O & O
\\
O & O & O & O
\\
[(E_{k} - F_{k}) \circ \widetilde{H}_{k}]_{\A \B_{k}^{-}}^{\top} & O & O & O
\end{bmatrix}
.
\end{align*}
It then follows from~\eqref{eq:dHd} and~\eqref{eq:rhoHDQH} that
\begin{align}
& h^{\top} \sigma(x_{k}, Z_{k}) h - \gamma_{k-1} \big\langle H_{k}, \D \mathcal{Q}(-G(x_{k}))H_{k} \big\rangle \nonumber
\\
&= \gamma_{k-1} \Big\langle \widetilde{H}_{k}, D_{k}^{\dagger} \widetilde{H}_{k} [-D_{k}]_{+}^{3} + [-D_{k}]_{+}^{3} \widetilde{H}_{k} D_{k}^{\dagger} - P_{k}^{\top} (\D \mathcal{Q}(-G(x_{k}))H_{k}) P_{k} \Big\rangle \nonumber
\\
&= 2 \gamma_{k-1} \sum_{i \in \A} \sum_{j \in \B_{k}^{-}} [E_{k} - F_{k}]_{ij} [\widetilde{H}_{k}]_{ij}^{2}. \label{eq:DHD_PDP}
\end{align}
Noting~\eqref{def:matE} and~\eqref{def:matF} yields
\begin{align*}
[E_{k} - F_{k}]_{ij} = \frac{ \lambda_{j}(G(x_{k}))^{4} }{ \lambda_{i}(G(x_{k}))^{2} - \lambda_{i}(G(x_{k})) \lambda_{j}(G(x_{k})) } \geq 0 \quad \forall i \in \A, \, \forall j \in \B_{k}^{-}.
\end{align*}
Hence, equations~\eqref{def:HH} and~\eqref{eq:DHD_PDP} ensure
\begin{align}
h^{\top} \sigma(x_{k}, Z_{k}) h - \gamma_{k-1} \big\langle \D G(x_{k}) h, \D \mathcal{Q}(-G(x_{k})) \D G(x_{k}) h \big\rangle \geq 0. \label{ineq:sigma_HQH}
\end{align}
Now, Theorem~\ref{thm:F_derivative_Hessian} derives that
\begin{align*}
h^{\top} \nabla^{2} \F(x_{k}; \gamma_{k-1}) h = h^{\top} \nabla_{xx}^{2} L(x_{k}, y_{k}, Z_{k}) h + \gamma_{k-1} \Vert \nabla g(x_{k})^{\top} h \Vert^{2} + \gamma_{k-1} \big\langle \D G(x_{k}) h, \D \mathcal{Q}(-G(x_{k})) \D G(x_{k}) h \big\rangle.
\end{align*}
Combining this equality, $h \in S(x_{k}, \bar{x})$, and $\nabla^{2} \F(x_{k}; \gamma_{k-1}) + \varepsilon_{k} I = \nabla^{2} \F(x_{k}; \gamma_{k-1}) + \delta_{k-1} I \succeq O$ yields
\begin{align}
h^{\top} \left( \nabla_{xx}^{2} L(x_{k}, y_{k}, Z_{k}) + \sigma(x_{k}, Z_{k} \right) h \geq  h^{\top} \sigma(x_{k}, Z_{k}) h - \gamma_{k-1} \big\langle \D G(x_{k}) h, \D \mathcal{Q}(-G(x_{k})) \D G(x_{k}) h \big\rangle - \varepsilon_{k} \Vert h \Vert^{2}. \label{ineq:HessL_sigma}
\end{align}
From~\eqref{ineq:sigma_HQH} and~\eqref{ineq:HessL_sigma}, the assertion is proven.
\end{proof}

From now on, we present a penalty method for finding a point that satisfies the second-order sequential optimality conditions. 
Let $\{ \gamma_{k} \} \subset (0,\infty)$ be a penalty parameter sequence satisfying $\gamma_{k} \to \infty~(k \to \infty)$. The proposed penalty method iteratively solves the following subproblem at each iteration $k \in \mathbb{N} \cup \{ 0 \}$:
\begin{align}
\begin{aligned} \label{subpro:penalty}
& \mini_{x \in \R^{n}} && \F(x; \gamma_{k}) = f(x) + \frac{\gamma_{k}}{2} \Vert g(x) \Vert^{2} + \frac{\gamma_{k}}{4} {\rm tr}([-G(x)]_{+}^{4}),
\end{aligned}
\end{align}
In particular, we suppose that problem~\eqref{subpro:penalty} is solved by utilizing existing trust region methods, such as~\cite[Chapter~4]{NoWr06}. Then, the formal statement of the proposed penalty method is provided as Algorithm~\ref{algo:penalty}.

\begin{algorithm} 
\caption{~(Penalty method)} \label{algo:penalty}
\begin{algorithmic}[1]

\State Choose $\eta \in (0,1)$ and $\theta > 1$. Select $\{ \delta_{k} \} \subset (0,1)$ satisfying $\delta_{k} \to 0~(k \to \infty)$. Set $x_{0} \in \mathcal{S}$, $\widehat{x}_{0} \coloneqq x_{0}$, $\gamma_{0} > 0$, and $k \coloneqq 0$.

\State Solve problem~\eqref{subpro:penalty} by using a trust region method with an initial point $\widehat{x}_{k}$, and obtain its approximate solution $x_{k+1}$ satisfying 
\begin{equation}
\Vert \nabla \F (x_{k+1}; \gamma_{k}) \Vert \leq \delta_{k} , \quad \nabla^{2} \F(x_{k+1}; \gamma_{k}) + \delta_{k} I \succeq O. \label{penalty_condition}
\end{equation}

\State Update $\widehat{x}_{k}$ and $\gamma_{k}$ as follows:
\begin{align} \label{rule:x_hat}
\widehat{x}_{k+1} \coloneqq
\begin{cases}
x_{k+1} & {\rm if} ~ \F(x_{k+1}; \gamma_{k}) \leq f(x_{0}),
\\
x_{0} & {\rm otherwise,}
\end{cases}
\quad \gamma_{k+1} \coloneqq
\begin{cases}
\gamma_{k} & {\rm if} ~ k=0 ~ {\rm or} ~ u_{k+1} \leq \eta u_{k},
\\
\theta \gamma_{k} & {\rm otherwise,}
\end{cases}
\end{align}
where $u_{k} \coloneqq \max \{ \Vert g(x_{k}) \Vert, \, \Vert [-G(x_{k})]_{+} \Vert_{{\rm F}} \}$.

\State Set $k \leftarrow k + 1$ and go back to Step~2.

\end{algorithmic}
\end{algorithm}

\begin{remark}
A penalty method for finding AKKT2 and CAKKT2 points has already been proposed in~\cite{LiYaFu25}. The main difference between Algorithm~\ref{algo:penalty} and the existing method is the subproblems minimizing their penalty functions. Since the penalty function used in Algorithm~\ref{algo:penalty} is twice continuously differentiable, we can utilize existing trust region methods equipped with global convergence to second-order stationary points, such as~\cite[Chapter~4]{NoWr06}, to obtain an approximate solution of~\eqref{subpro:penalty} satisfying~\eqref{penalty_condition}. Meanwhile, at each iteration, the existing method also needs to solve the subproblem minimizing the penalty function and find its stationary point satisfying some additional conditions similar to its second-order optimality. However, as mentioned in~\cite[Remark~5.2]{LiYaFu25}, the existing method has a practical issue because a way of finding such a stationary point is an open question. The issue arises from the fact that the penalty function used in~\cite{LiYaFu25} is not twice continuously differentiable. Indeed, due to the differentiability of the penalty function, it is difficult for the existing method to adopt a similar way to Algorithm~\ref{algo:penalty}. Moreover, Algorithm~\ref{algo:penalty} does not require the penalty parameter to diverge to infinity, but the existing method has to do it. Therefore, Algorithm~\ref{algo:penalty} can be regarded as a revised version of the existing penalty method of~\cite{LiYaFu25}.
\end{remark}

\noindent
We show that Algorithm~\ref{algo:penalty} can find AKKT2 and CAKKT2 points for problem~\eqref{NSDP}. To this end, we prepare the following lemma.

\begin{lemma} \label{lem:feasible}
Let $\{ x_{k} \} \subset \R^{n}$ be a sequence generated by Algorithm~{\rm \ref{algo:penalty}}. If $\{ x_{k} \}$ converges to a point $\bar{x}$, then it satisfies $\bar{x} \in \mathcal{S}$.
\end{lemma}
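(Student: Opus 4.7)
The plan is to split into two cases according to whether the penalty sequence $\{\gamma_k\}$ stays bounded or diverges. In both cases, since $\P(x) = \frac{1}{2}\Vert g(x) \Vert^2 + \frac{1}{4}{\rm tr}([-G(x)]_+^4)$, the goal reduces to showing $\P(\bar x) = 0$; this is equivalent to $g(\bar x) = 0$ together with $[-G(\bar x)]_+ = O$, i.e.\ $\bar x \in \mathcal{S}$.

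First, if $\{\gamma_k\}$ is bounded, the update rule in~\eqref{rule:x_hat} forces $\gamma_k$ to stabilize, so there exists $K$ with $\gamma_{k+1} = \gamma_k$ for every $k \geq K$. For such $k$ the rule then requires $u_{k+1} \leq \eta u_k$, so $u_k \to 0$ geometrically. Since $u_k = \max\{\Vert g(x_k) \Vert, \Vert [-G(x_k)]_+ \Vert_{{\rm F}}\}$, this gives $\Vert g(x_k) \Vert \to 0$ and $\Vert [-G(x_k)]_+ \Vert_{{\rm F}} \to 0$, and continuity of $g$ and $G$ yields $g(\bar x) = 0$ and $G(\bar x) \succeq O$.

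The remaining case, $\gamma_k \to \infty$, is the main obstacle, since the rule for $\widehat{x}_{k+1}$ does not a priori drive the infeasibility measure down. The plan is to exploit the monotone-descent property of the trust region method (standard in any variant following~\cite[Chapter~4]{NoWr06}), which guarantees $\F(x_{k+1}; \gamma_k) \leq \F(\widehat{x}_k; \gamma_k)$. I would introduce the dichotomy: call iteration $k$ \emph{type A} if $\widehat{x}_{k+1} = x_{k+1}$, equivalently $\F(x_{k+1}; \gamma_k) \leq f(x_0)$, and \emph{type B} otherwise. The key observation is that a type B iteration at $k$ sets $\widehat{x}_{k+1} = x_0$, and since $\P(x_0) = 0$ from $x_0 \in \mathcal{S}$, monotone descent gives $\F(x_{k+2}; \gamma_{k+1}) \leq \F(x_0; \gamma_{k+1}) = f(x_0)$, which forces iteration $k+1$ to be type A. Consequently, type A iterations occur infinitely often.

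On the type A subsequence, the defining inequality $\F(x_{k+1}; \gamma_k) \leq f(x_0)$ rearranges to $\gamma_k \P(x_{k+1}) \leq f(x_0) - f(x_{k+1})$, and the right-hand side stays bounded because $\{x_k\}$ is convergent, hence bounded, and $f$ is continuous. Dividing by $\gamma_k \to \infty$ yields $\P(x_{k+1}) \to 0$ along the type A subsequence; continuity of $\P$ together with $x_k \to \bar x$ then forces $\P(\bar x) = 0$. This completes Case 2 and the proof. The essential subtlety is precisely the bookkeeping with the restart rule: without the type B $\Rightarrow$ type A transition, one might fear that the algorithm restarts from $x_0$ every iteration and never drives $\P(x_{k+1})$ down, but the transition argument rules that out cleanly using only feasibility of $x_0$.
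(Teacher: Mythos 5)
Your proof is correct and follows essentially the same route as the paper: the same dichotomy on whether $\{\gamma_k\}$ stabilizes or diverges, the geometric decay $u_{k+1}\le\eta u_k$ in the first case, and the inequality $\gamma_k\,\P(x_{k+1})\le f(x_0)-f(x_{k+1})$ obtained from the trust-region descent property together with the feasibility of $x_0$ in the second. The only difference is local bookkeeping in the divergent case: your type-A/type-B argument establishes $\F(x_{k+1};\gamma_k)\le f(x_0)$ only along an infinite subsequence (which suffices, since the whole sequence converges to $\bar x$), and in doing so it keeps the penalty parameters consistently indexed, whereas the paper asserts $\F(x_k;\gamma_{k-1})\le f(x_0)$ for every $k$ by combining the descent bound $\F(x_k;\gamma_{k-1})\le\F(\widehat{x}_{k-1};\gamma_{k-1})$ with a bound on $\F(\widehat{x}_{k-1};\,\cdot\,)$ that it only verifies at the parameter $\gamma_{k-2}$ --- so your version is, if anything, the more careful of the two.
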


\begin{proof}
Let $\I \coloneqq \{ k \in \mathbb{N} \cup \{ 0 \}; \gamma_{k+1} = \gamma_{k} \}$ and $\J \coloneqq \{ k \in \mathbb{N} \cup \{ 0 \}; \gamma_{k+1} = \theta \gamma_{k} \}$. There are two possible cases: (a) $|\J| < \infty$; (b) $|\J| = \infty$. We begin by considering case~(a). In this case, there eixsts $\bar{m} \in \mathbb{N} \backslash \{ 0 \}$ such that $k \in |\I|$ for all $k \geq \bar{m}$. For an arbitrary integer $j > 0$, we have $u_{\bar{m}+j} \leq \eta u_{\bar{m}+j-1} \leq \eta^{2} u_{\bar{m}+j-2} \leq \cdots \leq \eta^{j} u_{\bar{m}}$ from the second definition of~\eqref{rule:x_hat} and $\bar{m} + j, \bar{m} + j -1, \ldots, \bar{m} \in |\J|$. This fact and $\eta \in (0,1)$ imply $u_{k} \to 0$ as $k \to \infty$, that is, $\bar{x} \in \mathcal{S}$. Next, we consider case~(b). This case ensures that $\gamma_{k} \to \infty ~ (k \to \infty)$. Let $k \in \mathbb{N}$ be an arbitrary integer. From~\eqref{rule:x_hat}, there are two cases: $\widehat{x}_{k} = x_{k}$ and $\widehat{x}_{k} = x_{0}$. The former case yields $\F(\widehat{x}_{k}, \gamma_{k-1}) = \F(x_{k}; \gamma_{k-1}) \leq f(x_{0})$. In the latter case, the feasibility of $x_{0}$ derives $\F(\widehat{x}_{k}; \gamma_{k-1}) = \F(x_{0}; \gamma_{k-1}) = f(x_{0})$. Consequently, we have $\F(\widehat{x}_{k}; \gamma_{k-1}) \leq f(x_{0})$. Meanwhile, we recall that $x_{k}$ is obtained by the trust region method with an initial point $\widehat{x}_{k-1}$. It then follows from the feature of the trust region method that $\F(x_{k}; \gamma_{k-1}) \leq \F(\widehat{x}_{k-1}; \gamma_{k-1})$. These facts ensure that $\F(x_{k}; \gamma_{k-1}) \leq f(x_{0})$, and hence we have
$\frac{1}{2} \Vert g(x_{k}) \Vert^{2} + \frac{1}{4} \Vert [-G(x_{k})]_{+}^{2} \Vert_{{\rm F}}^{2} \leq \frac{1}{\gamma_{k-1}} (f(x_{0}) - f(x_{k}))$. 
Thus, taking $k \to \infty$ yields $g(\bar{x}) = 0$ and $G(\bar{x}) \succeq O$. As a result, both cases~(a) and~(b) lead to $\bar{x} \in \mathcal{S}$.
\end{proof}

\noindent
Finally, we provide the global convergence of Algorithm~\ref{algo:penalty}.
\begin{theorem}
Let $\{ x_{k} \} \subset \R^{n}$ be a sequence generated by Algorithm~{\rm \ref{algo:penalty}}, and let $\{ y_{k} \} \subset \R^{m}$ and $\{ Z_{k} \} \subset \S_{+}^{d}$ be defined by $y_{k} \coloneqq - \gamma_{k-1} g(x_{k})$, $Z_{k} \coloneqq \gamma_{k-1} [-G(x_{k})]_{+}^{3}$, and $\varepsilon_{k} \coloneqq \delta_{k-1}$ for all $k \in \mathbb{N}$. Suppose that $\{ x_{k} \}$ has an accumulation point $\bar{x}$. Then, the following statements hold:
\begin{itemize}
\item[{\rm (i)}] $\bar{x}$ is an AKKT{\rm 2} point, and $\{ (x_{k}, y_{k}, Z_{k}, \varepsilon_{k}) \}$ is an AKKT{\rm 2} sequence corresponding to $\bar{x}$;
\item[{\rm (ii)}] if $\bar{x}$ satisfies the Polyak-\L ojasiewicz inequality, then $\bar{x}$ is a CAKKT{\rm 2} point, and $\{ (x_{k}, y_{k}, Z_{k}, \varepsilon_{k}) \}$ is a CAKKT{\rm 2} sequence corresponding to $\bar{x}$.
\end{itemize}
\end{theorem}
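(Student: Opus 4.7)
The plan is to extract a subsequence along which $x_k \to \bar{x}$ and, after relabeling, feed the two bounds in~\eqref{penalty_condition} into Lemmas~\ref{lem:x_AKKT} and~\ref{lem:x_AKKT2} to obtain~(i), then use the Polyak-\L ojasiewicz inequality to pass from AKKT to CAKKT for~(ii). Lemma~\ref{lem:feasible} already guarantees $\bar{x} \in \mathcal{S}$.

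For~(i), Step~2 of Algorithm~\ref{algo:penalty} yields $\|\nabla \F(x_{k+1};\gamma_k)\| \leq \delta_k$ with $\delta_k \to 0$, and Theorem~\ref{thm:F_derivative_Hessian} gives $\nabla \F(x_k;\gamma_{k-1}) = \nabla_x L(x_k,y_k,Z_k)$, so Lemma~\ref{lem:x_AKKT} applies and tells us that $\bar{x}$ is an AKKT point with AKKT sequence $\{(x_k,y_k,Z_k)\}$. The condition $\nabla^2 \F(x_{k+1};\gamma_k) + \delta_k I \succeq O$ in~\eqref{penalty_condition} is exactly the hypothesis of Lemma~\ref{lem:x_AKKT2}, which supplies $h^\top(\nabla_{xx}^2 L(x_k,y_k,Z_k) + \sigma(x_k,Z_k))h \geq -\varepsilon_k\|h\|^2$ for every $h \in S(x_k,\bar{x})$ and every $k \geq \bar{n}$, with $\varepsilon_k = \delta_{k-1} \to 0$. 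These two conclusions are exactly what Definition~\ref{def:AKKT2} requires, settling~(i).

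For~(ii) the second-order clause is already in hand from the previous paragraph, so only the complementarity limit $G(x_k)\circledcirc Z_k \to O$ in Definition~\ref{def:CAKKT2} needs to be established. Since $Z_k = \gamma_{k-1}[-G(x_k)]_+^3$ is a polynomial in $G(x_k)$, the two matrices commute and $G(x_k)\circledcirc Z_k = G(x_k)Z_k$. Diagonalizing $G(x_k)$ and observing that only its negative eigenvalues contribute, a power-mean comparison on the resulting scalar entries yields
\begin{equation*}
\|G(x_k)\circledcirc Z_k\|_{{\rm F}} \leq \gamma_{k-1}\|[-G(x_k)]_+\|_{{\rm F}}^{4}.
\end{equation*}
A further Cauchy-Schwarz step gives $\|[-G(x_k)]_+\|_{{\rm F}}^{4} \leq d\,{\rm tr}([-G(x_k)]_+^{4}) \leq 4d\,\P(x_k)$. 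The Polyak-\L ojasiewicz inequality at $\bar{x}$, combined with $\P(\bar{x})=0$, produces $\P(x_k) \leq \Psi(x_k)\|\nabla \P(x_k)\|$ for large $k$, and the identity $\nabla \F(x_k;\gamma_{k-1}) = \nabla f(x_k) + \gamma_{k-1}\nabla \P(x_k)$ furnishes $\gamma_{k-1}\|\nabla \P(x_k)\| \leq \delta_{k-1} + \|\nabla f(x_k)\|$, which is bounded uniformly in $k$. Hence $\gamma_{k-1}\P(x_k) \to 0$, so $\|G(x_k)\circledcirc Z_k\|_{{\rm F}} \to 0$, completing~(ii).

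The main obstacle I foresee is the matrix estimate $\|G(x_k)\circledcirc Z_k\|_{{\rm F}} \leq \gamma_{k-1}\|[-G(x_k)]_+\|_{{\rm F}}^{4}$. Although it reduces to a scalar inequality by simultaneously diagonalizing $G(x_k)$ and $Z_k$, one must invoke that $Z_k$ is a polynomial in $G(x_k)$ to justify replacing $\circledcirc$ by ordinary multiplication, and then track exponents so that the single power of $\gamma_{k-1}$ on the right is exactly absorbed by the quartic lower bound $\P(x_k) \geq \tfrac{1}{4d}\|[-G(x_k)]_+\|_{{\rm F}}^{4}$; it is this tight pairing between the cubic multiplier $[-G(x_k)]_+^{3}$ and the quartic penalty term in $\P$ that renders the lone factor $\gamma_{k-1}^{-1}$ extracted from the Polyak-\L ojasiewicz bound just sufficient to close the loop.
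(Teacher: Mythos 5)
Your proposal is correct and follows essentially the same route as the paper: item~(i) is obtained by feeding the conditions in~\eqref{penalty_condition} into Lemmas~\ref{lem:feasible}, \ref{lem:x_AKKT}, and~\ref{lem:x_AKKT2}, and item~(ii) combines the Polyak--\L ojasiewicz inequality with the identity $\nabla \F(x_{k};\gamma_{k-1}) = \nabla f(x_{k}) + \gamma_{k-1}\nabla\P(x_{k})$ to force $\gamma_{k-1}\,{\rm tr}([-G(x_{k})]_{+}^{4}) \to 0$, which controls $G(x_{k})\circledcirc Z_{k}$. The only difference is cosmetic: the paper bounds $\Vert G(x_{k})Z_{k}\Vert_{{\rm F}} = \gamma_{k-1}\Vert[-G(x_{k})]_{+}^{4}\Vert_{{\rm F}} \leq {\rm tr}(\gamma_{k-1}[-G(x_{k})]_{+}^{4})$ via ${\rm tr}(M^{2})\leq{\rm tr}(M)^{2}$ for $M\succeq O$, whereas you route through $\gamma_{k-1}\Vert[-G(x_{k})]_{+}\Vert_{{\rm F}}^{4}\leq 4d\,\gamma_{k-1}\P(x_{k})$, picking up a harmless dimensional constant.
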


\begin{proof}
Without loss of generality, we assume that $x_{k} \to \bar{x}$ as $k \to \infty$. Now, we show item~(i). It follows from Lemma~\ref{lem:feasible} that $\bar{x} \in \mathcal{S}$. Thus, Lemma~\ref{lem:x_AKKT} ensures that $\bar{x}$ satisfies the AKKT conditions, and $\{ (x_{k}, y_{k}, Z_{k}) \}$ is an AKKT sequence corresponding to $\bar{x}$. Therefore, combining these facts and Lemma~\ref{lem:x_AKKT2} derives the assertion of item~(i).
\par
Next, we prove item~(ii). From the assertion of item~(i), it is sufficient to show that $G(x_{k}) \circledcirc Z_{k} \to O$ as $k \to \infty$. Since $\bar{x}$ is feasible and satisfies the Polyak-\L ojasiewicz inequality, there exist $\nu > 0$ and $\Psi \colon B(\bar{x}, \nu) \to \R$ such that $\Psi(x) \to 0 ~ (x \to \bar{x})$ and
\begin{align} \label{ineq:PL}
\frac{1}{2} \Vert g(x) \Vert^{2} + \frac{1}{4} {\rm tr} ( [-G(x)]_{+}^{4} ) \leq \Psi(x) \Vert \nabla g(x) g(x) - \D G(x)^{\ast} [-G(x)]_{+}^{3} \Vert \quad \forall x \in B(\bar{x}, \nu).
\end{align} 
From $x_{k} \to \bar{x}~(k \to \infty)$, there exists $\bar{n} \in \mathbb{N}$ such that $x_{k} \in B(\bar{x}, \nu)$ for all $k \geq \bar{n}$. Now, we supppose that $k \geq \bar{n}$. Recall that $\nabla \F (x_{k}; \gamma_{k-1}) = \nabla f(x_{k}) + \gamma_{k-1} \nabla g(x_{k}) g(x_{k}) - \gamma_{k-1} \D G(x_{k})^{\ast} [-G(x_{k})]_{+}^{3}$. It then follows from $x_{k} \in B(\bar{x}, \nu)$, $\Vert \nabla \F(x_{k}, \gamma_{k-1}) \Vert \leq \delta_{k}$, and~\eqref{ineq:PL} that
\begin{align} \label{ineq:G_Psi}
{\rm tr}( \gamma_{k-1} [-G(x_{k})]_{+}^{4}) 
\leq 4 \Psi(x_{k}) \Vert \nabla \F (x_{k}; \gamma_{k-1}) - \nabla f(x_{k}) \Vert
\leq 4 \Psi(x_{k}) \left( \delta_{k-1} + \Vert \nabla f(x_{k}) \Vert \right).
\end{align}
Meanwhile, we notice that $[-G(x_{k})]_{+}^{2} = (-G(x_{k})) [-G(x_{k})]_{+}$ and $Z_{k} = \gamma_{k-1} [-G(x_{k})]_{+}^{3}$. These equalities lead to
\begin{align}
\Vert G(x_{k}) \circledcirc Z_{k} \Vert_{{\rm F}} \leq \Vert G(x_{k}) Z_{k} \Vert_{{\rm F}} = \sqrt{ {\rm tr} \left( \left( \gamma_{k-1} [-G(x_{k})]_{+}^{4} \right)^{2} \right) }
\leq {\rm tr}\left( \gamma_{k-1} [-G(x_{k})]_{+}^{4} \right), \label{ineq:GcircledcircZ}
\end{align}
where the last inequality is derived from ${\rm tr}(M^{2}) \leq {\rm tr}(M)^{2}$ for any $M \succeq O$. Combining~\eqref{ineq:G_Psi} and~\eqref{ineq:GcircledcircZ} implies $\Vert G(x_{k}) \circledcirc Z_{k} \Vert_{{\rm F}} \leq 4 \Psi(x_{k}) \left( \delta_{k-1} + \Vert \nabla f(x_{k}) \Vert \right)$. It then follows from $\Psi(x_{k}) \to 0~(k \to \infty)$ that $G(x_{k}) \circledcirc Z_{k} \to O$ as $k \to \infty$. Therefore, the proof is completed.
\end{proof}

\section{Conclusion} \label{sec:conclusion}
We have proposed a twice continuously differentiable penalty function related to problem~\eqref{NSDP}. Some existing optimization methods, such as penalty methods and augmented Lagrangian methods, utilize a penalty function to ensure their convergence property, and several types of penalty functions have been proposed. However, the existing penalty functions have low compatibility with optimization methods to find second-order stationary points because they are not twice continuously differentiable. Meanwhile, the proposed penalty function is expected to have a high affinity for such methods because of its high-order differentiability. To indicate the high affinity, a practical penalty method to find AKKT2 and CAKKT2 points has been proposed by incorporating the proposed function and an existing trust region method, and its global convergence to the second-order stationary points has been proven. As future research, it is worth proposing other optimization methods, such as augmented Lagrangian and SQP-type methods, equipped with the proposed penalty method, and showing their global convergence to second-order stationary points.



\bibliographystyle{abbrv}
\bibliography{references}


\end{document}